\documentclass[12pt]{amsart}
\usepackage{amssymb,amsmath,amsthm,bm}
\usepackage[colorinlistoftodos,prependcaption,textsize=tiny]{todonotes}
\oddsidemargin=-.0cm
\evensidemargin=-.0cm
\textwidth=16cm
\textheight=22cm
\topmargin=0cm
 \definecolor{darkblue}{RGB}{0,0,160}
\usepackage{fouriernc} 
\usepackage[colorlinks=true,allcolors=darkblue]{hyperref}
\DeclareSymbolFont{usualmathcal}{OMS}{cmsy}{m}{n}
\DeclareSymbolFontAlphabet{\mathcal}{usualmathcal}

\usepackage[T1]{fontenc}

\usepackage{color}

\usepackage{parskip}
\usepackage{setspace}

\usepackage{graphicx}

\newtheorem{theorem}{Theorem}[section]
\newtheorem{remark}{Remark}[section]

\newtheorem{lemma}[theorem]{Lemma}

\newtheorem*{definition*}{Definition}

\begin{document}
\title{The Erd\H{o}s-Falconer distance problem in the tree setting}
\author{Thang Pham\and Steven Senger\and Dung The Tran}

  \address{Hanoi University of Science, Vietnam National University}
\email{thangpham.math@vnu.edu.vn}

\address{Department of Mathematics, Missouri State University}
\email{StevenSenger@MissouriState.edu}

\address{Department of Mathematics, National Taiwan Normal University}
\email{tranthedung56@gmail.com}

\maketitle  
\begin{abstract}
The recent breakthrough of Guth, Iosevich, Ou, and Wang (2019) on the Falconer distance problem states that for a compact set $A\subset \mathbb{R}^2$, if the Hausdorff dimension of $A$ is greater than $\frac{5}{4}$, then the distance set $\Delta(A)$ has positive Lebesgue measure. In a very recent paper, Murphy, Petridis, Pham, Rudnev, and Stevens (2022) proved the prime field version of this result, namely, for $E\subset\mathbb{F}_p^2$ with $|E|\gg p^{5/4}$, there exist many points $x\in E$ such that the number of distinct distances from $x$ is at least $cp$. The main purpose of this paper is to provide extensions  in a very general structure of pinned trees, which is inspired by the recent work due to Ou and Taylor (2021).
\end{abstract}
\section{Introduction}
For $A\subset \mathbb{R}^d$, we define its distance set by
\[\Delta(A):=\{|x-y|\colon x, y\in A\}.\]
The celebrated Falconer distance conjecture says that for any compact set $A\subset \mathbb{R}^d$, if its Hausdorff dimension is greater than $\frac{d}{2}$, then the distance set has positive Lebesgue measure, i.e. $\mathcal{L}^1(\Delta(A))>0$. In a recent breakthrough paper, Guth, Iosevich, Ou, and Wang \cite{alex-fal} proved that for a compact set $A\subset \mathbb{R}^2$, if $\dim_H(A)>\frac{5}{4}$, then $\mathcal{L}^1(\Delta(A))>0$. This result has been generalized to higher even dimensions by Du, Iosevich, Ou, and Wang in \cite{Du3}. In odd dimensions, the best known dimensional threshold is due to Du and Zhang \cite{Du2} with $\dim_H(A)>\frac{d}{2}+\frac{d}{4d-2}$.

More recently, Ou and Taylor \cite{OS} studied a generalization for the setting of trees. More precisely, let $T$ be an arbitrary tree with $k+1$ vertices and $k$ edges. Assume that $V(T)=\{v_1, \ldots, v_{k+1}\}$, then the edge set of $T$ can be enumerated as follows:
\[\mathcal{E}(T)=\{ (v_{i_{1}}, v_{i_{2}}), (v_{i_3}, v_{i_4}), \ldots, (v_{i_{2k-1}}, v_{i_{2k}})\},\]
where $i_1\le i_3\le \cdots\le i_{2k-1}$, and $i_{2s}< i_{2t}$ if $s<t$ and $i_{2s-1}=i_{2t-1}$. 

The edge-length vector of $T$ is defined by 
\[(|v_{i_1}-v_{i_2}|, |v_{i_3}-v_{i_4}|, \ldots, |v_{i_{2k-1}}-v_{i_{2k}}|) \in \mathbb{R}^k.\]
Given a set $A\subset \mathbb{R}^2$, we say a tree $T'$, with vertices in $A$ given by $V(T')=\{x_1, \ldots, x_{k+1}\}$, is isomorphic to $T$ if there is a map $\varphi\colon V(T')\to V(T)$ such that $(x, y)$ is an edge of $T'$ iff $\left(\varphi(x), \varphi(y)\right)$ is an edge of $T$. For $x\in V(T')$ and $v\in V(T)$, we say $(T', x)$ is isomorphic to $(T, v)$ if $T$ is isomorphic to $T'$ and $\varphi(x)=v$. We say two isomorphic trees $T$ and $T'$ with $k$ edges are distinct if their edge-length vectors are distinct. Ou and Taylor proved the following theorem.
\begin{theorem}\label{O-T}
Given a tree $T$ with $k\ge 1$ edges and an arbitrary vertex $v$, let $A$ be a compact set in $\mathbb{R}^2$. If $\dim_H(A)>\frac{5}{4}$, then there exists $x\in A$ such that the set of edge-lengths of trees $(T', x)$ which are isomorphic to $(T, v)$ has positive Lebesgue measure. 
\end{theorem}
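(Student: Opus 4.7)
I would proceed by induction on the number of edges $k \ge 1$. The base case $k=1$ is precisely the pinned Falconer distance theorem of Guth, Iosevich, Ou, and Wang \cite{alex-fal}, which I want to use in its strongest form: for $\dim_H A > 5/4$ one can fix a Frostman probability measure $\mu$ on $A$ and a set $A_0 \subset A$ with $\mu(A_0) > 0$ such that, for every $y \in A_0$, the pinned distance measure $\sigma_y := (z \mapsto |y-z|)_{\#}\mu$ lies in $L^2(\mathbb{R})$. In particular $\{|y-z|:z\in A\}$ has positive Lebesgue measure for every such $y$.

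For the inductive step, I would assume the conclusion (in the strengthened $L^2$ form described below) for all trees with $k-1$ edges. Given $(T,v)$ with $k$ edges, pick a leaf $u \ne v$---guaranteed to exist since any tree on at least two vertices has at least two leaves---let $w$ be its unique neighbor, and set $T^- := T - u$. For each candidate pin $x\in A$, write $\nu_x^T$ for the pushforward of $\mu^{\otimes k}$ (distributed over the $k$ non-pinned vertices) under the edge-length map. Disintegrating over the position $y\in A$ of the attaching vertex $w$ yields the factorization
\[\nu_x^T(dt\, dt_k) \;=\; \int_A \widetilde{\nu}_x^{T^-}(dt;\, dy)\;\sigma_y(dt_k),\]
where $\widetilde{\nu}_x^{T^-}$ denotes the \emph{joint} law on $\mathbb{R}^{k-1}\times A$ of the $T^-$-edge-length vector together with the location of $w$, and integrating out the new leaf $u$ against $\mu$ produces the $\sigma_y$ in the last slot. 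The base case supplies $\sigma_y \in L^2(\mathbb{R})$ for $\mu$-a.e.\ $y$, while the strengthened induction hypothesis supplies $L^2$ control of $\widetilde{\nu}_x^{T^-}$ for a positive $\mu$-measure set of pins $x$. A Cauchy--Schwarz estimate in the $y$-integral then gives $\nu_x^T \in L^2(\mathbb{R}^k)$ for such $x$, so the edge-length set of $(T,v)$ based at $x$ has positive $k$-dimensional Lebesgue measure, closing the induction.

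The hard part, I expect, will be choosing the inductive hypothesis strong enough to be self-propagating. Pure marginal control $\nu_x^{T^-} \in L^2(\mathbb{R}^{k-1})$ is not sufficient; the factorization above requires joint information on $\widetilde{\nu}_x^{T^-}$ coupling the edge-lengths to the position of the attaching vertex, together with a Frostman-type bound on the $y$-marginal that matches the blow-up of $\|\sigma_y\|_2^2$ near bad pins. One must also treat the degenerate case $w=v$ separately; it is easier, since then $y$ is fixed at $x$ and the factorization collapses to a direct product with $\sigma_x$. Once the enhanced hypothesis is formulated correctly, the Fourier-analytic $L^2$ propagation through each leaf addition is routine, and the deep input of \cite{alex-fal} enters only through one application of pinned distance per added edge.
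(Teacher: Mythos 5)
First, a disclosure: the paper does not actually prove Theorem \ref{O-T}. It is quoted from Ou and Taylor \cite{OS} as motivation, and what the paper proves are the finite field and arbitrary field analogues (Theorems \ref{tree0} and \ref{tree1}), by an induction whose decomposition is quite different from yours. The paper's induction cases on the degree of the pin $v$: if $v$ is a leaf, it removes $v$ itself, applies the hypothesis to the smaller tree pinned at the unique neighbor $u$ of $v$, and then attaches $x$ to the cluster of $u$-pinned subtrees via a pinned-distance estimate between two sets (Lemma \ref{54} or Lemma \ref{arbi}); if $\deg(v)>1$, it splits $T$ at $v$ into two pinned subtrees sharing only $v$ and multiplies the counts. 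Both cases peel or split \emph{at the pin}, so the induction never needs to track the location of any non-pin vertex.

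Your proposal instead always removes a leaf $u\neq v$ and disintegrates over the position of its neighbor $w$, which may sit deep inside the tree. This forces the inductive hypothesis to control a joint object $\widetilde{\nu}_x^{T^-}$ coupling edge-lengths with the position of $w$, plus a quantitative Frostman/$L^2$ bound that beats the blow-up of $\|\sigma_y\|_{L^2}$ near bad pins. You identify this as ``the hard part'' and then assert that, once the hypothesis is formulated correctly, the rest is ``routine.'' That is the genuine gap: the joint control is strictly stronger than the marginal bound $\nu_x^{T^-}\in L^2(\mathbb{R}^{k-1})$, the Cauchy--Schwarz in $y$ cannot be justified without saying precisely what is being propagated, and the bare Guth--Iosevich--Ou--Wang input (that $\sigma_y\in L^2$ for $\mu$-a.e.\ $y$) does not by itself give quantitative control of $\int \|\sigma_y\|_{L^2}^2$ against the $w$-marginal. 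This is exactly where the technical work of Ou and Taylor lives, so as written the proposal is an outline with its central difficulty named but not closed. Note also that the pin-peeling/pin-splitting structure used in the paper sidesteps joint vertex-position control entirely, which is what makes the argument portable to the discrete setting; if you want a proof that mirrors the paper's and is self-contained modulo the pinned distance theorem, you should induct at the pin rather than at an outer leaf.
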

The main purpose of this paper is to extend this theorem to the discrete setting over finite fields and arbitrary fields. 

Let $\mathbb{F}$ denote an arbitrary field. When we deal with the special case that $\mathbb{F}$ is a finite field of order $q$, where $q$ is an odd prime power, we use the notation $\mathbb{F}_q$. The characteristic of $\mathbb{F}$ is denoted by $\mathtt{char}(\mathbb{F})$. We also use the  notation $X\gg Y$ to denote that there exists a constant $C>0$ such that $X\ge CY$.

In the setting of arbitrary fields $\mathbb{F}$, the distance between two points $x=(x_1, \ldots, x_d)$ and $y=(y_1, \ldots, y_d)$ in $\mathbb{F}^d$ is defined by 
\[||x-y||=(x_1-y_1)^2+\cdots+(x_d-y_d)^2.\]
The finite field analogue of the Falconer distance problem is known as the Erd\H{o}s-Falconer distance problem in the literature, which asks for the smallest exponent $\alpha$ such that for any $E\subset \mathbb{F}_q^d$, if $|E|\ge q^{\alpha}$, then the distance set $\Delta(E)$ covers a positive proportion of or all possible distances, where 
\[\Delta(E):=\{||x-y||\colon x, y\in E\}.\]
Iosevich and Rudnev proved in \cite{IR07} that $\alpha\le \frac{d+1}{2}$. They also showed that the exponent $\frac{d+1}{2}$ is sharp in odd dimensions. In even dimensions, it is also conjectured that the right exponent should be $\alpha=\frac{d}{2}$. Some partial progress on this conjecture can be found in \cite{CEHIK10, KPSV, KPV}. In two dimensions over prime fields, Murphy, Petridis, Pham, Rudnev, and Stevens \cite{murphy} recently proved that $\alpha\le \frac{5}{4}$, which is directly in line with Guth, Iosevich, Ou, and Wang's result for the Falconer distance problem. We also note that Murphy et al. actually proved a stronger statement, namely, if $|E|\ge p^{5/4}$, then there exist many points $x\in E$ such that $|\Delta_x(E)|\gg p$, where $\Delta_x(E)=\{||x-y||\colon y\in E\}$ is the set of pinned distances from $x$. Our first theorem is a generalization of this result in the form of Theorem \ref{O-T}.
\begin{theorem}\label{tree0}
Given a tree $T$ with $k\ge 1$ edges and an arbitrary vertex $v,$ let $E\subset \mathbb{F}_p^2$ with $|E|\ge 5\cdot 2^{16k} \cdot p^{5/4}$. There is a subset $E' \subset E$ with $|E'|\ge |E|/2^{24k+2}$ such that for any $x\in E'$, the number of distinct trees $(T', x)$ which are isomorphic to $(T, v)$ is at least $\gg p^k$. 
\end{theorem}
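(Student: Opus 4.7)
\emph{Proof plan.} The plan is to iterate the MPPRS pinned distance bound $k$ times in succession and then to build an isomorphic copy of $T$ one edge at a time, in breadth-first order from the distinguished vertex $v$. First, set $E^{(0)}:=E$ and, for each $i=1,\dots,k$, apply the MPPRS pinned distance theorem to $E^{(i-1)}$ to extract a subset $E^{(i)}\subseteq E^{(i-1)}$ with $|E^{(i)}|\ge|E^{(i-1)}|/C_{0}$ such that every $z\in E^{(i)}$ satisfies $|\Delta_{z}(E^{(i-1)})|\gg p$; here $C_{0}$ is the absolute constant produced by MPPRS. The hypothesis $|E|\ge 5\cdot 2^{16k}p^{5/4}$ is chosen so that $|E^{(i-1)}|\ge p^{5/4}$ at every stage, guaranteeing the MPPRS hypothesis at each step. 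Setting $E':=E^{(k)}$ and tracking $C_{0}$ through the iteration will yield $|E'|\ge|E|/2^{24k+2}$, matching the subset bound in the statement.

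Now regard $T$ as rooted at $v$, and for each vertex $u$ of $T$ let $h(u)$ denote the height of the subtree of $T$ hanging from $u$, so $0\le h(u)\le k$ with $h(u)=0$ exactly when $u$ is a leaf. Fix $x\in E'$; we will produce $\gg p^{k}$ distinct edge-length vectors by defining injective maps $\varphi\colon V(T)\to E$ in BFS order $v=u_{1},u_{2},\dots,u_{k+1}$, subject throughout to the invariant $\varphi(u_{i})\in E^{(h(u_{i}))}$. This is consistent at the root since $\varphi(v)=x\in E^{(k)}\subseteq E^{(h(v))}$ and at the leaves since $E^{(0)}=E$. When $\varphi(u_{j})$ is being chosen at step $j\ge 2$, its parent $p_{j}$ already satisfies $\varphi(p_{j})\in E^{(h(p_{j}))}\subseteq E^{(h(u_{j})+1)}$ because $h(p_{j})\ge h(u_{j})+1$, so by construction of the chain $|\Delta_{\varphi(p_{j})}(E^{(h(u_{j}))})|\gg p$; hence the new edge length $t_{j}:=\|\varphi(p_{j})-\varphi(u_{j})\|$ can be made to take $\gg p$ distinct values by admissible choices of $\varphi(u_{j})\in E^{(h(u_{j}))}$.

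Since the number of distinct edge-length vectors does not depend on the coordinate ordering used to record them (any reordering is a bijection of $\mathbb{F}_{p}^{k}$), we may count in the BFS order above. Partitioning the admissible $k$-tuples $(t_{1},\dots,t_{k})$ by their $(k-1)$-prefix and combining with the $\gg p$ extensions per step from the previous paragraph, the chain rule gives $\gg p^{k}$ distinct edge-length vectors at $x$, as required. The injectivity of $\varphi$ needed for $T'$ to be a genuine isomorphic copy of $T$ is maintained at negligible cost, since at most $k$ vertex-images have been fixed when $u_{j}$ is embedded while the relevant distance-fibers of $\varphi(p_{j})$ inside $E^{(h(u_{j}))}$ are typically much larger than $k$.

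The principal obstacle will be the iterated MPPRS: each application loses a constant multiplicative factor in the size of the extracted subset and requires that subset to still exceed $p^{5/4}$ in cardinality, so after $k$ iterations both the size threshold $|E|\ge 5\cdot 2^{16k}p^{5/4}$ and the subset bound $|E'|\ge|E|/2^{24k+2}$ become exponential in $k$. By contrast, the tree combinatorics---choosing the BFS order, defining the height function $h(u)$, and verifying that the canonical edge ordering in the definition of the edge-length vector is immaterial---requires no new ideas.
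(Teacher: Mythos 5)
Your plan is a genuinely different route from the paper's. The paper proves a strengthened two-set statement (Theorem \ref{techP}) by induction on the number of edges, using a two-set pinned-distance Lemma \ref{54}, halving the sets $E$ and $F$ at each stage, and splitting into cases according to whether $\deg(v)=1$ or $\deg(v)>1$ (in the latter case decomposing $T$ into two subtrees at $v$). You instead iterate the single-set MPPRS bound to build a nested chain $E = E^{(0)}\supset E^{(1)}\supset\cdots\supset E^{(k)}$, and then embed $T$ by BFS with the invariant $\varphi(u)\in E^{(h(u))}$ driven by the height function. This is cleaner: it treats all tree shapes uniformly (no case analysis on $\deg(v)$, no set-halving bookkeeping), the nesting of the $E^{(i)}$ automatically gives $\varphi(p_j)\in E^{(h(u_j)+1)}$ whenever $h(p_j)\ge h(u_j)+1$, and the chain-rule count over BFS prefixes is a correct way to accumulate $\gg p^k$ distinct edge-length vectors (your remark that the canonical ordering of edges is irrelevant since any reordering is a bijection of $\mathbb{F}_p^k$ is right). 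The injectivity concern is handled correctly in substance — excluding the $\le k$ already-placed images removes at most $k$ distances from the $\gg p$ available — though the phrasing about fiber sizes is a bit off, since the relevant point is the number of excluded distances, not fiber cardinalities.

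However, there is a concrete gap: the MPPRS triangle bound you rely on (Theorem \ref{2.1}, which underlies the pinned-distance extraction) is only valid in the window $5p^{5/4}\le |E|\le p^{4/3}$. You ensure the lower bound survives the $k$ iterations, but you never address the upper bound. Theorem \ref{tree0} is stated for all $|E|\ge 5\cdot 2^{16k}p^{5/4}$ with no ceiling, and if $|E|>p^{4/3}$ the first application of MPPRS in your chain is illegitimate. Simply passing to a subset of size $\lfloor p^{4/3}\rfloor$ does not rescue the claimed density $|E'|\ge |E|/2^{24k+2}$ once $|E|$ is much larger than $p^{4/3}$. The paper handles this by proving a separate large-set lemma (Lemma \ref{bigsize}), which uses the Hanson--Lund--Roche-Newton bisector-energy bound and a point-line incidence count valid for $|E|\gg p$, and then splits the conclusion of Theorem \ref{tree0} into three ranges (medium, transitional, large). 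Your proof needs an analogous large-set branch: for $|E|\gg p^{4/3}$, replace the MPPRS extraction in each iteration by a pinned-distance statement obtained from Lemma \ref{bigsize} (or an equivalent incidence argument), and then glue the two regimes together as in the paper's ``Conclusion'' subsection.
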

Our next theorem addresses the case of arbitrary fields. 
\begin{theorem}\label{tree1}
Given a tree $T$ with $k\ge 1$ edges and an arbitrary vertex $v$, let $E\subset \mathbb{F}^2$ with $|E|\gg 1$. If $\mathtt{char}(\mathbb{F})>0$, then we also need the restriction that $|E|\le \mathtt{char}(\mathbb{F})^{4/3}$. We assume in addition that any isotropic line contains at most $M=4^{-1}(16K)^{-(k-1)}(\lfloor |E|/8 \rfloor)^{\left(\frac{2}{3}\right)^{(k-1)}}$ points from $E$. Then there is a subset $E'\subset E$ with $|E'|\ge c(k)|E|^{\left(\frac{2}{3}\right)^{k}}$  such that for any $x\in E'$, the number of distinct trees $(T', x)$ which are isomorphic to $(T, v)$ is at least $\gg |E|^\frac{2k}{3}$, where $c(k)>0$ depends on $k,$ but is independent of $E$ and $q$.
\end{theorem}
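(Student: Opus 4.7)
The plan is to proceed by induction on $k$, the number of edges of $T$. For the base case $k = 1$, the tree is a single edge, and the conclusion reduces to a pinned distance theorem in $\mathbb{F}^2$: under the hypotheses on $\mathtt{char}(\mathbb{F})$ and on the number of $E$-points on any isotropic line, there is a subset $E_1' \subseteq E$ of size $\ge c_1|E|^{2/3}$ with $|\Delta_x(E)| \gg |E|^{2/3}$ for every $x \in E_1'$. Such a result over arbitrary fields follows by adapting the pinned-distance arguments of \cite{murphy} (originally for prime fields) to our setting; the particular form of $M$ stated in the theorem is tuned precisely so that this base case continues to apply after restriction to the smaller subsets arising in the induction.

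For the inductive step, assume the conclusion for $k - 1$ and pick a leaf $u \ne v$ of $T$ with unique neighbor $w$; set $T_0 := T \setminus \{u\}$, a tree on $k$ vertices with $k - 1$ edges still containing $v$. I first apply the inductive hypothesis to $(T_0, v)$ and $E$, producing a subset $E^\circ \subseteq E$ with $|E^\circ| \ge c(k-1)|E|^{(2/3)^{k-1}}$ such that every $x \in E^\circ$ admits $\gg |E|^{2(k-1)/3}$ distinct $(T_0, v)$-edge-length vectors pinned at $x$. I then apply the base case to $E^\circ$, whose isotropic-line hypothesis is inherited from $E$ by the chosen $M$, obtaining $E' \subseteq E^\circ$ with $|E'| \ge c_1|E^\circ|^{2/3} \ge c(k)|E|^{(2/3)^k}$ and $|\Delta_x(E^\circ)| \gg |E^\circ|^{2/3}$ for every $x \in E'$.

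Fix $x \in E'$. Each $(T_0, v)$-copy pinned at $x$ extends to a $(T, v)$-copy by attaching a new vertex $y \in E$ to the image $w'$ of $w$, appending $||w' - y||$ to the edge-length vector. To show the resulting number of distinct $k$-edge-length vectors is $\gg |E|^{2k/3}$, I bound it below by $M_k(x)^2 / N_k(x)$ via Cauchy--Schwarz, where $M_k(x) \asymp |E|^k$ counts all $(T, v)$-copies pinned at $x$, and $N_k(x)$ counts pairs of such copies with matching edge-length vector. The key observation is the recursion $N_k(x) \le N_{k-1}(x) \cdot \max_{w_1', w_2'} \sqrt{r_{w_1'}(E) \, r_{w_2'}(E)}$, where $r_{w'}(E) := \sum_t |\{y \in E : ||w' - y|| = t\}|^2$ is the pinned-distance energy at $w'$; the base case guarantees $r_{w'}(E) \le C|E|^{4/3}$ when $w'$ lies in a pinned-distance-good subset of $E$. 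Iterating this bound through the $k$ leaves peeled off gives $N_k(x) \le C^k |E|^{4k/3}$, hence $|D(x)| \gg |E|^{2k/3}$.

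The hard part will be restricting the relevant tree copies at each level of the recursion to those whose internal vertex images lie in the pinned-distance-good subset of $E$, so that the bound $r_{w'}(E) \le C|E|^{4/3}$ is available at every step. This requires a careful bootstrapping of the base case, showing that the pinned-distance-good subset has positive proportion of $E$ under the theorem's isotropic-line hypothesis (so that restricting tree copies does not erode $M_k(x)$). The specific value $M = 4^{-1}(16K)^{-(k-1)}(\lfloor|E|/8\rfloor)^{(2/3)^{k-1}}$ in the statement is exactly what is needed to guarantee the base case at every level $i \le k-1$ of the recursion, and verifying the accumulated constants $(16K)^{-(k-1)}$ will be the final bookkeeping.
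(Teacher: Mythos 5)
Your decomposition is genuinely different from the paper's. The paper always keeps the pinned vertex $v$ at the center of the reduction: if $\deg(v)=1$ it peels off $v$ and multiplies a pinned-distance count from $x$ to $F_2'$ by the inductive count of $T_1$-copies rooted at those $y\in F_2'$; if $\deg(v)>1$ it splits $T$ at $v$ into two subtrees and multiplies the two inductive counts. In both cases the new edge being counted always touches $x$ itself, so Lemma \ref{arbi} is applied directly to the pinned point and no second-moment argument is needed. You instead peel off a leaf $u\neq v$, which forces the new edge to sit in the interior of the configuration, and then control distinct edge-length vectors via $M_k(x)^2/N_k(x)$ and a recursion on the second moment $N_k(x)$. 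That Cauchy--Schwarz recursion is itself correct as written, but it shifts the burden to a uniform upper bound on the pinned energy $r_{w'}(E)=\sum_t |\{y\in E: \lVert w'-y\rVert=t\}|^2$ at interior vertices $w'$, which is where the argument breaks down.

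Specifically, the claim ``the base case guarantees $r_{w'}(E)\leq C|E|^{4/3}$ when $w'$ lies in a pinned-distance-good subset of $E$'' is false as stated. Having $|\Delta^*_{w'}(E)|\gg |E|^{2/3}$ is a \emph{lower} bound on the pinned distance set, and by Cauchy--Schwarz it only yields the \emph{lower} bound $r^*_{w'}(E)\geq |E|^2/|\Delta^*_{w'}(E)|$; it gives no upper bound on the energy. What you actually need is a Markov-type argument on $\mathcal{T}^*(E)=\sum_{w'\in E} r^*_{w'}(E)\leq K|E|^{7/3}$ (Theorem \ref{tree-lm1}), which shows that \emph{most} $w'\in E$ satisfy $r^*_{w'}(E)\lesssim |E|^{4/3}$, and then the pinned-distance lower bound follows from this energy upper bound, not the other way around. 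Even after this fix, the bootstrapping you flag as ``the hard part'' is a genuine unresolved gap: you must restrict every interior vertex of every $T_0$-copy at every level of the recursion to the energy-good subset while showing that this restriction does not erode $M_k(x)$ below $\gg |E|^k$, and you have not given a mechanism for doing so. The paper's decomposition avoids this entirely, since only the pinned point $x$ ever needs to be ``good,'' and Lemmas \ref{arbi} and \ref{54} supply exactly that. I would recommend rewriting the induction so that the vertex removed (or the split point) is always $v$ itself, following the two-case structure of Theorem \ref{techArbi}; this sidesteps both the energy-bound error and the bootstrapping problem.
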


If $\mathbb{F}$ is a specific field, says $\mathbb{R}$, then one would expect to have better bounds. For example, Guth and Katz \cite{GK10} proved that any set of $n$ points in $\mathbb{R}^2$ determines at least $n/\log n$ distinct distances, but for the pinned version, the best current record is due to Katz and Tardos \cite{KT} with the lower bound $n^{0.8641...}$. For $k=2$, the optimal result over the reals was given by Rudnev in \cite{Rud}. Namely, he showed that any set of $n$ points in $\mathbb{R}^2$ determines at least $n^2/\log^3 n$ distinct hinges. We conjecture that, in the form of Theorem \ref{tree1}, the optimal lower bound should be $\gtrsim n^k$.  For similar configurations, we refer the reader to  \cite{Ben, alex1, alex2, lyall} and references therein for more details.

\section{Preliminary: main lemmas}
Theorems \ref{tree0} and \ref{tree1} will be proved by induction, but the argument is complicated. The main difficulty arises from the property that a vertex might have many neighborhoods in the tree, which makes the iteration procedure become much harder. 

In inductive steps, we need strengthened versions of the distance result from Murphy et al. in \cite{murphy} for two sets which play a crucial role. Because of the peculiar effects of zero distances, we will frequently make use of the pinned nonzero distance set, $\Delta^*_x(E)$, the set of nonzero distances between a fixed point $x$ and points in $E$. More precisely, we define
$$\Delta^*_x(E):=\{||x-y||:y\in E, ||x-y||\neq 0\}.$$
We first recall the following result from \cite{murphy}. 
\begin{theorem}(\cite[Theorem 4]{murphy})\label{2.1}
Let $E$ be a subset in $\mathbb{F}_p^2$ with $5p^{5/4}\le |E| \le  p^{4/3}$. Let $\mathcal{T}^*(E, E, E)$, or in short $\mathcal{T}^*(E)$, be the number of triples $(a, b, c)\in E\times E\times E$ such that $||a-b||=||a-c||$ and $||b-c||\ne 0$. Then
\[\mathcal{T}^*(E)\le \frac{|E|^3}{p}+5p^{2/3}|E|^{5/3}+5p^{1/4}|E|^2.\] 
\end{theorem}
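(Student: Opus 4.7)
The plan is to interpret $\mathcal{T}^*(E)$ as a point--plane incidence count in $\mathbb{F}_p^3$ and then invoke Rudnev's point--plane incidence theorem. For each ordered pair $(b,c)\in E\times E$ with $\|b-c\|\ne 0$, the condition $\|a-b\|=\|a-c\|$ linearizes to
$$2(c_1-b_1)a_1+2(c_2-b_2)a_2=\|c\|^2-\|b\|^2,$$
exhibiting $a$ on the perpendicular bisector $\ell_{b,c}$. Under the lift $a\mapsto (a_1,a_2,1)\in\mathbb F_p^3$ this becomes an incidence with the plane $\pi_{b,c}=\{X:\,2(c_1-b_1)X_1+2(c_2-b_2)X_2-(\|c\|^2-\|b\|^2)X_3=0\}$; writing $\mathcal P=\{(a_1,a_2,1):a\in E\}$ and $\Pi=\{\pi_{b,c}:(b,c)\in E^2,\;\|b-c\|\ne 0\}$ (as a multiset), one has $\mathcal T^*(E)=I(\mathcal P,\Pi)$ with $|\mathcal P|=|E|$ and $|\Pi|\le|E|^2$.

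The critical structural input is the maximum number $k$ of planes in $\Pi$ that share a common line of $\mathbb F_p^3$. All $\pi_{b,c}$ pass through the origin, so the relevant lines also pass through the origin; for a direction $v=(v_1,v_2,v_3)$ the condition $\mathrm{span}(v)\subset \pi_{b,c}$ is a single linear relation coupling $c-b$ and $\|c\|^2-\|b\|^2$. A short case analysis on whether $v_3=0$ (and on whether the projection of $v$ to the base plane is isotropic) gives $k\lesssim|E|$, and this is exactly where the nonzero-distance hypothesis $\|b-c\|\ne 0$ is decisive: without it, isotropic pairs would concentrate into an $\Omega(|E|^2)$-sized subfamily of planes through a single isotropic line and destroy the estimate. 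With $|\mathcal P|\le |\Pi|$, the upper regime $|E|\le p^{4/3}$, and $k\lesssim|E|$ in hand, Rudnev's theorem (applied after a dyadic pigeonhole on plane multiplicities, since $|E|^2$ may exceed $p^2$) produces three contributions that reorganize into the claimed terms $|E|^3/p$, $p^{2/3}|E|^{5/3}$, and $p^{1/4}|E|^2$; the explicit numerical constant $5$ is extracted by balancing Rudnev's multiplicative constants against the lower hypothesis $|E|\ge 5p^{5/4}$.

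The main obstacle is getting Rudnev's theorem to yield the sharp middle term $p^{2/3}|E|^{5/3}$ rather than the weaker $|E|^{5/2}$ that comes from naively inserting $(|\mathcal P|,|\Pi|)=(|E|,|E|^2)$ into its standard form. To obtain the refined bound one must instead decompose $\Pi$ dyadically into sublevels $\Pi_j$ of planes of multiplicity $\sim 2^j$, apply the line-in-planes bound $k\lesssim|E|$ \emph{within} each level, and then sum the levels with a logarithmic loss absorbed into the constants. Tracking the sharp line-in-planes count, in which the hypothesis $\|b-c\|\ne 0$ is genuinely essential, is the heart of the matter; the rest is careful but routine bookkeeping with the explicit constants calibrated to the regime $5p^{5/4}\le|E|\le p^{4/3}$.
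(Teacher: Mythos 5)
The paper does not actually prove this theorem: it is imported verbatim as \cite[Theorem~4]{murphy}, and the only original content here is the remark that the implicit constant can be taken to be $5$ by tracking the computations in \cite{murphy}. Your proposal is therefore an attempt to reprove a black-box citation, and it should be judged on its own merits.

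There is a genuine gap, and it is exactly where you flag difficulty. After lifting the perpendicular bisectors to planes through the origin, you have $|\mathcal P|=|E|$ points and a multiset $\Pi$ of at most $|E|^2$ planes. Even if one separates $\Pi$ into distinct planes and applies Rudnev's point--plane incidence theorem, the second (Cauchy--Schwarz) term contributes $|E|^{1/2}\cdot|\overline\Pi|$, and summing against the multiplicities always returns $|E|^{1/2}\cdot|E|^2=|E|^{5/2}$. In the allowed range $|E|\ge 5p^{5/4}$ one has
\[
\frac{|E|^{5/2}}{p^{2/3}|E|^{5/3}}=\frac{|E|^{5/6}}{p^{2/3}}\ge 5^{5/6}\,p^{3/8},
\]
so this is strictly \emph{worse} than the target middle term by a growing power of $p$. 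The dyadic decomposition you propose does not repair this: if $\Pi_j$ is the set of $N_j$ distinct planes of multiplicity $\sim 2^j$, the second Rudnev term in each level is $\sim|E|^{1/2}N_j$, and the weighted sum $\sum_j 2^j|E|^{1/2}N_j=|E|^{1/2}\sum_j 2^jN_j=|E|^{5/2}$ is invariant under the decomposition; no weighting scheme that only uses $\sum_j 2^jN_j\le|E|^2$ can recover the factor $p^{2/3}|E|^{-5/6}$. (As a side remark, the collinear-plane parameter can be bounded by $k\le p+1$ unconditionally: the planes pass through the origin, and the distinct bisector lines through a fixed point, or parallel to a fixed direction, number at most $p+1$. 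The hypothesis $\|b-c\|\ne 0$ is needed for a different reason than you state --- it prevents a single isotropic line from being the common bisector of $\Omega(|E|^2)$ \emph{pairs}, i.e.\ it controls the multiplicity of one plane, not the number of distinct coaxial planes.)

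The proof in \cite{murphy} does not proceed by feeding $(|E|,|E|^2)$ directly into a point--plane theorem. It runs a considerably more delicate argument, combining a regularization/pigeonhole step with a refined point--line incidence estimate in $\mathbb F_p^2$ and a bisector-energy input, which is what produces the exponents $p^{2/3}|E|^{5/3}$ and $p^{1/4}|E|^2$ (note the latter term is \emph{larger} than the $k|\mathcal P|\le p|E|$ term your Rudnev set-up would yield, which already indicates the bookkeeping is organized around different quantities). So your route is genuinely different from the one in \cite{murphy}, and as written it does not close.
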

\begin{remark}
We note that the constant factor of the terms $p^{2/3}|E|^{5/3}$ and $p^{1/4}|E|^2$ is not stated precisely as such in \cite{murphy}, but it can be bounded by $5$ by carefully following the computations in the proof.
\end{remark}
Using this theorem, we have a result on distances between two different sets. 
\begin{lemma}\label{54}
Let $E$ and $F$ be two subsets in $\mathbb{F}_p^2$ with $5p^{5/4}\le |E|=|F| \le p^{4/3}$, then there exists a set $F'\subset F$ such that  $|F'|\ge \frac{1}{2^8}(|F|+1)$ and $|\Delta^*_x(E)|\gg p$ for any $x\in F'$.
\end{lemma}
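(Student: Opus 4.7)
The approach is to bound the two-set pinned isosceles count
\[
\mathcal{T}^*(E,F):=\#\{(x,a,b)\in F\times E\times E:\|x-a\|=\|x-b\|,\ \|a-b\|\neq 0\}=\sum_{x\in F}\mathcal{T}^*_x(E),
\]
and then extract good pins via Cauchy--Schwarz. Writing $G:=E\cup F$, one has $\mathcal{T}^*(E,F)\le \mathcal{T}^*(G)$, and $5p^{5/4}\le |G|\le 2|E|\le 2p^{4/3}$. Applying Theorem \ref{2.1} to $G$ (in the case $|G|\le p^{4/3}$ directly; in the marginal case $p^{4/3}<|G|\le 2p^{4/3}$ the same incidence input that proves Theorem \ref{2.1} yields the same shape of bound up to absolute constants), we obtain
\[
\mathcal{T}^*(E,F)\ll\frac{|E|^3}{p}+p^{2/3}|E|^{5/3}+p^{1/4}|E|^2\ll\frac{|E|^3}{p},
\]
where the two lower-order terms are absorbed using $|E|\ge 5p^{5/4}$.

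Next, fix a pin $x\in F$, set $E_x:=\{a\in E:\|x-a\|\neq 0\}$ and $\nu_t(x):=|\{a\in E:\|x-a\|=t\}|$. The algebraic heart of the argument is that in $\mathbb{F}_p^2$ the $\langle\cdot,\cdot\rangle$-orthogonal complement of any nonzero isotropic vector equals its own span: if $a\ne b$ satisfy $\|a-b\|=0$ and $\|x-a\|=\|x-b\|$, writing $v:=b-a$ forces $\langle x-a,v\rangle=0$, hence $x-a\in\mathrm{span}(v)$ and therefore $\|x-a\|=0$, a contradiction. Consequently the only off-diagonal pairs contributing to $\sum_t\nu_t(x)^2$ are precisely those counted by $\mathcal{T}^*_x(E)$, giving the clean identity
\[
\sum_{t\in\Delta^*_x(E)}\nu_t(x)^2=\mathcal{T}^*_x(E)+|E_x|.
\]
Cauchy--Schwarz applied to $|E_x|=\sum_t\nu_t(x)$ then yields $|\Delta^*_x(E)|\ge |E_x|^2/(\mathcal{T}^*_x(E)+|E_x|)$.

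To filter bad pins, note that every $x\in F$ lies on at most two isotropic lines, each containing at most $p$ points of $E$, so $|E_x|\ge |E|-2p\ge |E|/2$ for \emph{every} $x\in F$ (using $|E|\ge 5p^{5/4}\ge 4p$). If $|\Delta^*_x(E)|<cp$ for a small constant $c>0$ to be chosen, the above Cauchy--Schwarz inequality forces $\mathcal{T}^*_x(E)\gtrsim |E|^2/(cp)$. Summing this lower bound over the bad set $B:=\{x\in F:|\Delta^*_x(E)|<cp\}$ and comparing with the $|E|^3/p$ upper bound on $\mathcal{T}^*(E,F)$ gives $|B|\ll c|E|=c|F|$. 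Choosing $c$ small enough so that this is at most $|F|/2^8$ and setting $F':=F\setminus B$ completes the proof, with $|F'|\ge(1-2^{-8})|F|\ge(|F|+1)/2^8$ for $|F|\ge 1$.

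The main subtlety, and the only nontrivial step, is the isotropic identity $\sum_t\nu_t(x)^2=\mathcal{T}^*_x(E)+|E_x|$: without ruling out degenerate coincidences via the orthogonal-complement argument, this identity would carry an extra term counting zero-distance pairs in $E$, which on configurations clustered along isotropic lines could be as large as $|E|^2$ and would defeat the reduction to Theorem \ref{2.1}. The remaining technical point is extending Theorem \ref{2.1} from $|E|\le p^{4/3}$ to $|G|\le 2p^{4/3}$, which costs only absolute constants absorbed into the $2^8$.
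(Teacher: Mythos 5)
Your proof is correct and arrives at the same quantitative conclusion (in fact a slightly stronger one, $|F'|\ge (1-2^{-8})|F|$ rather than the paper's $|F|/129$), but it organizes the second-moment argument differently from the paper. The paper applies a single \emph{global} Cauchy--Schwarz to the double sum $\sum_{x\in F}\sum_{\delta\in\Delta^*_x(E)}|E\cap(x+C_\delta)|$, obtaining a lower bound $\sum_{x\in F}|\Delta^*_x(E)|\ge np/64$, and then pigeonholes: if $F'=\{x:|\Delta^*_x(E)|\ge p/129\}$ had fewer than $|F|/129$ elements, the sum would be too small. You instead apply Cauchy--Schwarz \emph{pin-by-pin}, getting $|\Delta^*_x(E)|\ge |E_x|^2/(\mathcal{T}^*_x(E)+|E_x|)$ for each $x$, then observe that any bad pin forces $\mathcal{T}^*_x(E)$ to be large, and kill the bad set via a Markov bound against the global upper bound $\sum_x\mathcal{T}^*_x(E)\lesssim|E|^3/p$. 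These are dual formulations of the same second-moment method; yours is arguably cleaner to state and yields a sharper density of good pins, while the paper's is a one-shot computation. You also make explicit the isotropic-line lemma (that a nonzero isotropic vector is its own orthogonal complement, so two distinct points on a common nondegenerate circle about $x$ cannot be at zero distance from each other), which the paper uses implicitly when asserting $I\le\mathcal{T}^*(E\cup F)$; this is a worthwhile clarification. Two small caveats apply equally to both your write-up and the paper's: (i) applying Theorem~\ref{2.1} to $G=E\cup F$ technically exceeds the stated range $|G|\le p^{4/3}$ since $|G|$ may be as large as $2p^{4/3}$, a point both treatments wave past; (ii) your displayed relation $\sum_t\nu_t(x)^2=\mathcal{T}^*_x(E)+|E_x|$ should be ``$\le$'' rather than ``$=$'' (your $\mathcal{T}^*_x(E)$ can also count pairs with $\|x-a\|=\|x-b\|=0$), but the inequality goes the right way and the argument is unaffected.
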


\begin{proof}
Let $I$ be the number of isosceles triangles of the form $(x, a, b)\in F\times E\times E$ such that $||x-a||=||x-b||\ne 0$. We now set $n=|E|=|F|.$ It is clear that $I$ is bounded from above by $\mathcal{T}^*(E\cup F)$.  So, Theorem \ref{2.1} gives
\begin{equation}\label{isoEqn}
I\leq \frac{(2n)^3}{p}+20p^{2/3}n^{5/3}+20p^{1/4}n^2\le \frac{16n^3}{p}.
\end{equation}

Let $C_\delta$ be the circle centered at the origin of radius $\delta$.
By noticing that no point $x\in F$ can have a zero distance to more than $2p-1$ points in $E$, we see 
\begin{equation}\label{EF}
\frac{|E||F|}{2}\le |F|(|E|-2p+1)\le \sum_{x\in F}\sum_{\delta\in \Delta^*_x(E)}|E\cap (x+C_{\delta})|.
\end{equation}
Applying the Cauchy-Schwarz inequality to \eqref{EF}, we get
\[\frac{n^2}{2}=\frac{|E||F|}{2}\le \sum_{x\in F}\sum_{\delta\in \Delta^*_x(E)}|E\cap (x+C_{\delta})|\le \left(\sum_{x\in F}|\Delta^*_x(E)| \right)^{1/2}\cdot \left(\sum_{x\in F}\sum_{\delta\in \Delta^*_x(E)}|E\cap (x+C_\delta)|^2  \right)^{1/2}.\]
Noticing that the double sum on the right hand side is $I^\frac{1}{2},$ we apply \eqref{isoEqn} to get that this is bounded from above by
\[\left(\sum_{x\in F}|\Delta^*_x(E)| \right)^{1/2}\cdot \frac{4n^{3/2}}{p^{1/2}}.\]
Combining these calculations, we get that
\[\sum_{x\in F}|\Delta^*_x(E)|\ge \frac{np}{64}.\]
Let $F':=\{x\in F\colon |\Delta^*_x(E)|\ge p/129\}$. Now for contradiction, suppose that $|F'|\le |F|/129$. 

Then we would have 
\[\sum_{x\in F}|\Delta^*_x(E)|=\sum_{x\in F'}+\sum_{x\not\in F'}\le \frac{np}{129}+\frac{np}{129}<\frac{np}{64}.\]
This leads to a contradiction. Hence, 
\[|F'|\ge |F|/129 \ge (|F|+1) / 2^8.\]
\end{proof}
When the set is of large size, we have much more freedom when bounding the number of isosceles triangles. More precisely, in the following lemma, compared to Lemma \ref{54}, we do not require that $E$ and $F$ have the same size. 

\begin{lemma}\label{bigsize}
Let $E$ and $F$ be two subsets in $\mathbb{F}_p^2$ with  $|E|\ge 4p, |F|\ge 8p$, and $|F||E|^2\ge 64 p^4$, then there exists a set $F'\subset F$ such that  $|F'|\ge \frac{|F|}{24}$ and $|\Delta^*_x(E)|\gg p$ for any $x\in F'$.
\end{lemma}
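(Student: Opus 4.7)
The plan is to mirror the strategy of Lemma \ref{54}. Denote by $I:=\#\{(x,a,b)\in F\times E\times E:\|x-a\|=\|x-b\|\ne 0\}$ the number of nondegenerate isosceles triples with apex in $F$ and legs in $E$. An upper bound on $I$, combined with Cauchy-Schwarz, will give the desired lower bound on $\sum_{x\in F}|\Delta^*_x(E)|$. The essential difference from Lemma \ref{54} is that, because the sets here can be much larger than $p^{4/3}$, we can no longer invoke Theorem \ref{2.1}; we use instead a cruder bound on $I$ whose error is controlled by the hypothesis $|F||E|^2\ge 64p^4$.

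I would first note that every null cone $x+C_0\subset\mathbb{F}_p^2$ has at most $2p-1$ points, so $|E|\ge 4p$ gives $|E\cap(x+C_0)|\le|E|/2$ for every $x$. Hence $\sum_{\delta\in\Delta^*_x(E)}|E\cap(x+C_\delta)|\ge|E|/2$, and summing over $x\in F$ together with Cauchy-Schwarz yields
\[\sum_{x\in F}|\Delta^*_x(E)|\ \ge\ \frac{|E|^2|F|^2}{4I}.\]
For the upper bound on $I$, I would use two complementary estimates. The elementary estimate: every circle $x+C_\delta$ has at most $p+1$ points, so $\sum_{\delta\ne 0}|E\cap(x+C_\delta)|^2\le(p+1)|E|$ for each $x$, giving $I\le(p+1)|E||F|$. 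The Fourier estimate: expanding $\mathbf{1}[\|x-a\|=\|x-b\|]$ as a character sum in an auxiliary variable $t$ and applying Plancherel on the nontrivial frequencies yields
\[I\ \le\ \frac{|E|^2|F|}{p}+p^2|E|.\]
The hypothesis $|F||E|^2\ge 64p^4$ is calibrated so that one of these estimates produces $I\ll|E|^2|F|/p$, which substituted back into Cauchy-Schwarz gives $\sum_{x\in F}|\Delta^*_x(E)|\gg p|F|$. A pigeonhole step in the same spirit as the end of Lemma \ref{54} then extracts the desired $F'\subset F$ with $|F'|\ge|F|/24$ on which $|\Delta^*_x(E)|\gg p$.

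The main obstacle is verifying that at least one of the two bounds on $I$ delivers $I\ll|E|^2|F|/p$ across the entire admissible range of $(|E|,|F|)$. This reduces to a short case analysis on the size of $|E|$: when $|E|\gtrsim p^2$ the elementary estimate $(p+1)|E||F|$ already beats $|E|^2|F|/p$, and when $|E|\lesssim p$ the hypothesis $|F||E|^2\ge 64p^4$ forces $|F|\gg p^2$, so $|E||F|\gg p^3$ and the Fourier error $p^2|E|$ is absorbed into the main term. The intermediate regime is handled by combining the two estimates together with $|F|\ge 8p$. The remainder is careful bookkeeping of the constants to land at the promised ratio $|F'|/|F|\ge 1/24$.
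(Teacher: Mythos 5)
Your overall framework (bound the isosceles count $I$, run Cauchy--Schwarz, pigeonhole) matches the paper's, and both of your stated estimates on $I$ are correct: the elementary bound $I\le(p+1)|E||F|$ and the character-sum bound $I\le\frac{|E|^2|F|}{p}+p^2|E|$ (the latter follows from Plancherel after dropping $1_F\le 1$). The problem is that \emph{neither} of these bounds, nor their minimum, gives $I\ll|E|^2|F|/p$ across the full admissible range, and your case analysis glosses over exactly the region where the lemma's hypotheses are tight.

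Concretely, take $|E|=|F|=n$ with $n\approx 4p^{4/3}$, which satisfies $|E|\ge 4p$, $|F|\ge 8p$, and $|F||E|^2=n^3\ge 64p^4$. The main term is $\frac{|E|^2|F|}{p}\approx p^3$. The elementary bound gives $(p+1)n^2\approx p^{11/3}$, larger by a factor $\approx p^{2/3}$. The Fourier error is $p^2|E|\approx p^{10/3}$, larger by a factor $\approx p^{1/3}$. Since both are upper bounds on the same quantity, taking the minimum does not help; there is no interpolation to be had. Your ``intermediate regime'' sentence asserts this case is handled by ``combining the two estimates together with $|F|\ge 8p$,'' but there is no mechanism in your argument that does this --- the gap is real, and it sits precisely at the threshold the hypothesis $|F||E|^2\ge 64p^4$ is designed to address. (Also, the sub-case ``$|E|\lesssim p$'' you invoke is vacuous here, as $|E|\ge 4p$ is assumed and $|F|\le p^2$ then forces $|E|\ge 8p$.)

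The paper sidesteps this by using a genuinely sharper bound on $I$: following Hanson, Lund, and Roche-Newton, it recasts the isosceles count as a point--line incidence count $\mathcal{I}(F,L)$ where $L$ is the multiset of perpendicular bisectors of pairs in $E$, then controls the multiplicity via the bisector energy $Q(E)\le\frac{4|E|^4}{p^2}+10p|E|^2$. This yields
\[
\mathcal{T}^*(F,E,E)\le\frac{|F||E|^2}{p}+p^{1/2}|F|^{1/2}\left(\frac{4|E|^4}{p^2}+10p|E|^2\right)^{1/2},
\]
whose error terms are absorbed into the main term exactly under $|F|\ge 8p$ and $|F||E|^2\ge 64p^4$, giving $\mathcal{T}^*(F,E,E)\le\frac{2|F||E|^2}{p}$. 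The bisector-energy input is the missing ingredient; without it, or something comparably strong, your approach does not close.
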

\begin{proof}
In this proof, we first follow an argument of Hanson, Lund, and Roche-Newton for one set in \cite{hanson}. Let $\mathcal{T}^*(F, E, E)$ be the number of triples $(a, b, c)\in F\times E\times E$ such that $||a-b||=||a-c||$ and $||b-c||\ne 0$. We now show that
\[\mathcal{T}^*(F, E, E)\le \frac{|F||E|^2}{p}+p^{1/2}|F|^{1/2}\left(\frac{4|E|^4}{p^2}+10p|E|^2 \right)^{1/2}.\]
To prove this inequality, we first recall the definition of bisector energy from \cite{hanson}. For any two points $(a, b)\in E\times E$, we denote their bisector line by $B(a, b)$, the bisector energy associated to $E$ is defined by 
\[Q(E):=\{(a, b, c, d)\in E^4\colon B(a, b)=B(c, d), ||a-b||\ne 0\}.\]
Hanson et al. proved in their Theorem 3 that 
\[Q(E)\ll \frac{|E|^4}{p^2}+q|E|^2.\]
The explicit constant can be computed easily from their proof. Namely, in their Lemma 13, one would have the upper bound $\frac{|E|^4}{p}+2p^2|E|^2$ from Iosevich and Rudnev's paper \cite{IR07}. Substituting this upper bound to their Lemma 11 and Theorem 3 would give us 
\[Q(E)\le \frac{4|E|^4}{p^2}+10p|E|^2.\]
We now observe that the equation $||a-b||=||a-c||$ with $||b-c||\ne 0$ can be viewed as an incidence between $a\in F$ and the bisector line $B(a, c)$. So our problem is reduced to an incidence bound. Let $L$ be the multi-set of bisector lines determined by pairs of points of non-zero distances in $E$. The above observation tells us that 
\[\mathcal{T}^*(F, E, E)\le \mathcal{I}(F, L),\]
here $\mathcal{I}(F, L)$ means the number of incidences between points in $F$ and lines in $L$. 

In \cite[Lemma 9]{hanson}, it has been proved that
\[\mathcal{I}(F, L)\le \frac{|F||L|}{p}+p^{1/2}|F|^{1/2}\left(\sum_{\ell\in \overline{L}}m(\ell)^2\right)^{1/2},\]
where $\overline{L}$ is the set of distinct elements in $L$ and $m(\ell)$ is the multiplicity of $\ell$. 

We also note that 
\[\sum_{\ell}m(\ell)^2=Q(E).\]
In other words, we would obtain

\[\mathcal{T}^*(F, E, E)=\mathcal{I}(F, L)\le \frac{|F||E|^2}{p}+p^{1/2}|F|^{1/2}\left(\frac{4|E|^4}{p^2}+10p|E|^2 \right)^{1/2}.\]
So, if $|F| \geq 8p$ and $|F||E|^2 \geq 64 p^4$, we obtain 
\[\mathcal{T}^*(F, E, E)\le \frac{2|F||E|^2}{p}.\]
We now run the same argument as in Lemma \ref{54} to conclude that there exists a set $F'\subset F$ with $|F'|\ge |F|/24$ and for all $x\in F'$, we have 
\[|\Delta_x^*(E)|\ge \frac{p}{24}.\]
\end{proof}
With the same approach, we obtain a similar result in the setting of arbitrary fields $\mathbb{F}$. Before stating it, we need a version of Theorem \ref{2.1} in $\mathbb{F}^2$.

\begin{theorem}(\cite[Theorem 1.4]{murphy})\label{tree-lm1}
Let $E$ be a set in $\mathbb{F}^2$. If $\mathtt{char}(\mathbb{F})>0$ is prime, then we need the restriction that $|E|\le \mathtt{char}(\mathbb{F})^{4/3}$. Let $\mathcal{T}^*(E)$ be the number of triples $(a, b, c)\in E\times E\times E$ such that $||a-b||=||a-c||$ and $||b-c||\ne 0$, then $\mathcal{T}^*(E)\le K|E|^{7/3}$, for some positive constant $K \geq 4$.
\end{theorem}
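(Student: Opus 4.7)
The approach is to translate $\mathcal{T}^*(E)$ into a point--plane incidence count in $\mathbb{F}^3$ and then invoke Rudnev's point--plane incidence theorem, which is the natural arbitrary-field analogue of the $\mathbb{F}_p$-specific tools used in Theorem~\ref{2.1}. This is exactly where the restriction $|E|\le\mathtt{char}(\mathbb{F})^{4/3}$ enters, since Rudnev's theorem requires a point count below $\mathtt{char}(\mathbb{F})^{2}$.

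The first step is to recast the count. Expanding $||a-b||=||a-c||$ gives the affine equation $2(b-c)\cdot a=||b||^{2}-||c||^{2}$, which defines the perpendicular bisector line $B(b,c)\subset\mathbb{F}^{2}$ whenever $b\neq c$ (and this is honest even if $||b-c||=0$). Thus
\[
\mathcal{T}^{*}(E)\;=\;\sum_{\substack{(b,c)\in E^{2}\\||b-c||\neq 0}}|E\cap B(b,c)|.
\]
Lift each admissible pair $(b,c)$ to the point $P_{b,c}:=(b-c,\,(||b||^{2}-||c||^{2})/2)\in\mathbb{F}^{3}$ and each $a\in E$ to the plane $\Pi_{a}:=\{(u,v)\in\mathbb{F}^{3}:u\cdot a=v\}$. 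The relation $a\in B(b,c)$ is equivalent to $P_{b,c}\in\Pi_{a}$, so $\mathcal{T}^{*}(E)=I(\mathcal{P},\mathcal{H})$ between $|\mathcal{P}|\le|E|^{2}$ points and $|\mathcal{H}|=|E|$ planes.

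The second step is to verify and apply Rudnev's theorem. Its key hypothesis is that no line in $\mathbb{F}^{3}$ contains too many lifted points. The fibre of the lift over a fixed difference $w=b-c$ lies on the line $\{(w,t):t\in\mathbb{F}\}$ and has size $|E\cap(E+w)|\le|E|$, so the collinearity parameter is controlled by translate intersections of $E$. The condition $||b-c||\neq 0$ is crucial here: it excises those fibres whose defining direction lies on an isotropic line and which could otherwise carry all of $E$ and wreck the hypothesis. Rudnev's theorem then yields a main bound roughly of the shape $|\mathcal{P}|^{1/2}|\mathcal{H}|+k|\mathcal{H}|$, which on its own only gives $\mathcal{T}^{*}(E)\lesssim |E|^{5/2}$.

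The improvement from $|E|^{5/2}$ down to $|E|^{7/3}$ is produced by handling the collinear/translate contribution separately via Cauchy--Schwarz against the translate energy $\sum_{w}|E\cap(E+w)|^{2}$, balanced dyadically with the Rudnev main term, exactly as in the Iosevich--Rudnev/Hanson--Lund--Roche-Newton strategy underlying the proof of Theorem~\ref{2.1}. Summing the two regimes and collecting the geometric series in the dyadic parameter yields $\mathcal{T}^{*}(E)\le K|E|^{7/3}$ for an absolute constant $K\ge 4$. The main obstacle I expect is keeping the collinear correction quantitatively sharp and tracking the absolute constant through both the application of Rudnev's theorem and the Cauchy--Schwarz balancing; any slack in the translate energy estimate immediately costs in the final exponent and is what has to be closed to land exactly at $7/3$ rather than at the intermediate exponents $22/9$ or $5/2$.
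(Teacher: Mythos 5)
This theorem is cited directly from \cite[Theorem 1.4]{murphy}; the paper itself gives no proof of it, so there is nothing internal to compare against. Your sketch does identify the right engine --- Rudnev's point--plane incidence theorem over arbitrary fields --- and your lift $(b,c)\mapsto P_{b,c}$ paired with the planes $\Pi_a$ is the correct dualization of the perpendicular-bisector picture. But two gaps remain before this becomes a proof.

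First, the map $(b,c)\mapsto P_{b,c}$ is far from injective: $P_{b,c}=P_{b',c'}$ whenever $b-c=b'-c'=:w$ and $(b-b')\cdot w=0$. Hence $\mathcal{P}$ is a multiset and $\mathcal{T}^*(E)$ is a \emph{weighted} incidence count, whereas Rudnev's theorem applies to plain sets. The natural second moment that absorbs these multiplicities via Cauchy--Schwarz is the bisector energy $Q(E)=|\{(b,c,b',c'):B(b,c)=B(b',c')\}|$ --- exactly the quantity the paper imports from \cite{hanson} in the proof of Lemma \ref{bigsize} over $\mathbb{F}_p$ --- and not the translate energy $\sum_w|E\cap(E+w)|^2$, which sees coincidences only in the first coordinate of the lift. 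Bounding $Q(E)$ over an arbitrary field is a substantive step that needs its own application of Rudnev's theorem, and that is where the dyadic decomposition you gesture at actually lives. Second, your explanation of the hypothesis $|E|\le\mathtt{char}(\mathbb{F})^{4/3}$ does not fit the reduction you set up: with $|E|$ planes and at most $|E|^2$ lift points, the smaller family feeding into Rudnev's theorem has size $|E|$, so the resulting restriction is merely $|E|\le\mathtt{char}(\mathbb{F})^2$, far weaker than $\mathtt{char}(\mathbb{F})^{4/3}$. The $4/3$ threshold has to come from a stage where the family used in Rudnev's theorem has size of order $|E|^{3/2}$, again pointing to the bisector-energy subroutine rather than the initial incidence count. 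A related slip: once the planes are the smaller family, the degeneracy parameter Rudnev requires is the maximal number of planes $\Pi_a$ through a common line, which equals the maximal number of collinear points of $E$ in $\mathbb{F}^2$, not the vertical-fibre count $|E\cap(E+w)|$ you propose.
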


The next lemma is in line with Lemma \ref{54} with Theorem \ref{tree-lm1} in place of Theorem \ref{2.1}.

\begin{lemma}\label{arbi}
Let $E$ and $F$ be two finite sets in $\mathbb{F}^2$ with $|E|=|F|=n$. 
Assume that any isotropic line contains at most $M<n/4$ points from $E$. Then there exists a set $F'\subset F$ such that $|F'|\ge n^{2/3}/(8(K+1))$ and $|\Delta^*_x(E)|\gg n^{2/3}$ for any $x\in F'$, where the constant $K$ comes from Theorem \ref{tree-lm1}.
\end{lemma}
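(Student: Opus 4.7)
The plan is to mirror the proof of Lemma \ref{54}, with Theorem \ref{tree-lm1} replacing Theorem \ref{2.1} and with the isotropic-line hypothesis playing the role that the $\mathbb{F}_p$-style bound ``at most $2p-1$ points at zero distance from $x$'' played there.

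I first let $I$ denote the number of triples $(x,a,b)\in F\times E\times E$ with $||x-a||=||x-b||\ne 0$. Since every such triple is counted by $\mathcal{T}^*(E\cup F)$ and $|E\cup F|\le 2n$, Theorem \ref{tree-lm1} yields
$$I\le K(2n)^{7/3}=2^{7/3}Kn^{7/3}.$$
Next, for each $x\in F$, the set of $y\in E$ with $||x-y||=0$ lies on the (at most two) isotropic lines through $x$, hence has at most $2M<n/2$ elements by hypothesis. Therefore
$$\frac{n^2}{2}<|F|(|E|-2M)\le \sum_{x\in F}\sum_{\delta\in\Delta^*_x(E)}|E\cap(x+C_\delta)|.$$
Applying Cauchy--Schwarz to the right-hand side and using the identity $\sum_{x\in F}\sum_\delta |E\cap(x+C_\delta)|^2=I$, I obtain
$$\sum_{x\in F}|\Delta^*_x(E)|\;\ge\;\frac{n^{5/3}}{2^{13/3}\,K}.$$

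Finally, I extract $F'$ by pigeonhole: set $F':=\{x\in F:|\Delta^*_x(E)|\ge c\,n^{2/3}\}$ for a small constant $c$ of order $1/K$ (for instance $c=2^{-16/3}/K$). Bounding $|\Delta^*_x(E)|\le n$ for $x\in F'$ and $|\Delta^*_x(E)|<c\,n^{2/3}$ for $x\in F\setminus F'$ inside the sum above gives an inequality of the form $|F'|\,n+c\,n^{5/3}\ge n^{5/3}/(2^{13/3}K)$, from which the claimed bound $|F'|\ge n^{2/3}/(8(K+1))$ follows by a routine arithmetic comparison (the check reduces to $56(K+1)\ge 2^{16/3}K$, which is immediate since $2^{16/3}<56$).

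The only genuinely new ingredient compared to Lemma \ref{54} is the isotropic-line accounting in the second step; everything else is essentially a line-by-line translation, with the exponent $7/3$ from Theorem \ref{tree-lm1} replacing the $\mathbb{F}_p$-tailored exponents of Theorem \ref{2.1}. I do not anticipate a real obstacle beyond bookkeeping the constants so that the denominator $8(K+1)$ comes out cleanly.
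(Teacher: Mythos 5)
Your approach is the same as the paper's: lower-bound the circle-count sum using the isotropic-line hypothesis, apply Cauchy--Schwarz against the isosceles-triangle count from Theorem \ref{tree-lm1}, and extract $F'$ by pigeonhole. There is, however, a small gap in the first step: the claim that every triple counted by $I$ is counted by $\mathcal{T}^*(E\cup F)$ is false for the diagonal triples with $a=b$, since there $||a-b||=0$ and $\mathcal{T}^*$ by definition requires the third side to be nonzero. These diagonal triples do contribute to $\sum_x\sum_\delta |E\cap(x+C_\delta)|^2$ (up to $n^2$ of them), so they must be accounted for. The paper handles this by adding $|A|^2\le |A|^{7/3}$ to the $\mathcal{T}^*$ bound, replacing $K$ by $K+1$; this is precisely where the $K+1$ in the lemma's denominator originates. (The other degenerate case one might worry about, $a\ne b$ with $||a-b||=0$, in fact never occurs here: a short computation shows that $||x-a||=||x-b||$ together with $||a-b||=0$ and $a\ne b$ forces $x-a$ into the isotropic line spanned by $a-b$, whence $||x-a||=0$, contradicting $\delta\ne 0$.)

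Your closing arithmetic is also not quite right as stated. From $|F'|\ge n^{2/3}/(2^{16/3}K)$, obtaining $|F'|\ge n^{2/3}/(8(K+1))$ would require $8(K+1)\ge 2^{16/3}K$, not the $56(K+1)\ge 2^{16/3}K$ you wrote, and $8(K+1)\ge 2^{16/3}K$ fails for $K\ge 4$. So as written your proof yields a weaker constant than the lemma claims. This is a bookkeeping defect rather than a structural one, but both the missing $+|A|^2$ term and the final constant should be repaired before the argument is complete.
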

\begin{proof}
 Let $C_\delta$ be the circle of radius $\delta$ centered at the origin. Let $\Delta^*_x(E)$ be the set of non-zero distances from $x\in F$ to $E$. Because we have no more than $M$ points on any isotropic line, we have 
\[ \frac{n^2}{2}\le |F|(|E|-2M+1)\le \sum_{x\in F}\sum_{\delta\in \Delta^*_x(E)}|E\cap (x+C_{\delta})|.\]

Applying the Cauchy-Schwarz inequality as in the proof of Lemma \ref{54}, one has 
\[\frac{n^2}{2}\le \left(\sum_{x\in F}|\Delta^*_x(E)| \right)^{1/2}\cdot \left(\sum_{x\in F}\sum_{\delta\in \Delta^*_x(E)}|E\cap (x+C_\delta)|^2  \right)^{1/2}.\]
We now observe that 
\[\sum_{x\in F}\sum_{\delta\in \Delta^*_x(E)}|E\cap (x+C_{\delta})|^2\]
is the number of triples $(x, y, z)\in F\times E\times E$ such that $||x-y||=||x-z||\ne 0$. By applying Theorem \ref{tree-lm1} with $A=E\cup F$, and taking into account the degenerate cases where $y=z,$ the number of such triples is bounded by at most  $K|A|^{7/3}+|A|^2\le (K+1)|A|^{7/3}$. 

Thus, 
\[\sum_{x\in F}|\Delta^*_x(E)|\ge \frac{1}{4(K+1)}n^{5/3}.\]
Let  $F':=\{x\in F\colon |\Delta^*_x(E)|\ge \frac{1}{8(K+1)}n^{2/3}\}$. For contradiction, suppose that  $|F'|< \frac{1}{8(K+1)}n^{2/3}$. 

Then we would have 
  \[\sum_{x\in F}|\Delta^*_x(E)|=\sum_{x\in F'}+\sum_{x\not\in F'}< \frac{n^{5/3}}{8(K+1)}+\frac{n^{5/3}}{8(K+1)}<\frac{n^{5/3}}{4(K+1)}.\]

This leads to a contradiction. Hence, 
 \[|F'|\ge \frac{1}{8(K+1)}n^{2/3} . \]
\end{proof}

\section{Proof of Theorem \ref{tree0}}
With lemmas in the previous section, we are ready to prove Theorem \ref{tree0}. We fall in two cases: sets of medium size and sets of large size.
\subsection{Sets of Medium size}
\begin{proof}[Proof of Theorem \ref{tree0} for sets of medium size]
We are going to prove the following stronger statement by induction: 
\begin{theorem}\label{techP}
Given a tree $T$ with $k$ edges and an arbitrary vertex $v$, let $E, F\subset \mathbb{F}_p^2$ with $5\cdot 2^{16k} \cdot p^{5/4}\le |E|=|F|\le p^{4/3}$. There is a subset $E'$ of $E$ with $|E'|\ge |E|/2^{8k}$ such that for any $x\in E'$, the number of distinct trees $(T', x)$ that are isomorphic to $(T, v)$ and have $V(T')\setminus \{x\}\subset F$ is at least $\gg p^{k}$. 
\end{theorem}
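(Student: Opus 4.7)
I would prove Theorem \ref{techP} by induction on $k$, using Lemma \ref{54} both as the base case and as the pinned-distance engine that drives the inductive step. When $k=1$ the tree $T$ is a single edge, so applying Lemma \ref{54} with the roles of $E$ and $F$ reversed produces $E' \subset E$ with $|E'| \ge (|E|+1)/2^8 \ge |E|/2^8$ such that $|\Delta^*_x(F)| \gg p$ for every $x \in E'$, which is precisely the desired conclusion.

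For $k \ge 2$, I would pick a leaf $w$ of $T$ with $w \ne v$ (possible since any tree on at least three vertices has at least two leaves), let $u$ be its unique neighbor, and set $T^* := T - w$, a tree with $k-1$ edges still containing $v$. Before invoking induction, I would apply Lemma \ref{54} to the pair $(F,F)$ to extract a subset $F_0 \subset F$ with $|F_0| \ge (|F|+1)/2^8$ and $|\Delta^*_y(F)| \gg p$ for every $y \in F_0$, then choose any $E_0 \subset E$ with $|E_0| = |F_0|$ and apply the inductive hypothesis to $(T^*, v)$ with the pair $(E_0, F_0)$. The constant $5 \cdot 2^{16k}$ in the hypothesis on $|E|$ is designed precisely so that $|F_0| \ge 5 \cdot 2^{16k-8} p^{5/4} \ge 5 \cdot 2^{16(k-1)} p^{5/4}$, while $|F_0| \le p^{4/3}$ is automatic. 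The induction therefore yields $E' \subset E_0 \subset E$ with $|E'| \ge |E_0|/2^{8(k-1)} \ge |E|/2^{8k}$, such that every $x \in E'$ admits at least $\gg p^{k-1}$ distinct embeddings $T^{*\prime} \cong T^*$ with $v \mapsto x$ and all remaining vertices landing in $F_0$.

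To complete the induction, I would label the vertices of $T$ so that $w = v_{k+1}$, which ensures that deleting the edge $(u,w)$ from any embedded $T'$ removes exactly one coordinate of its edge-length vector and returns the vector of the associated $T^{*\prime}$. For each of the $\gg p^{k-1}$ distinct edge-length vectors of $T^{*\prime}$, I fix a realizing embedding and let $y \in F_0$ be the image of $u$; since $y \in F_0$, we have $|\Delta^*_y(F)| \gg p$. Discarding the at most $k-1$ values $||y-y'||$ with $y' \in V(T^{*\prime}) \setminus \{y\}$ leaves $\gg p$ admissible distances $\delta$, and for each such $\delta$ any $z \in F$ on the corresponding circle automatically avoids $V(T^{*\prime})$ and yields a valid embedding $T' \cong T$. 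Because the edge-length vector of $T'$ determines both $\delta$ (the deleted coordinate) and the vector of $T^{*\prime}$ (the remaining coordinates), distinct pairs $(T^{*\prime}, \delta)$ produce distinct $T'$-vectors, giving at least $\gg p^{k-1} \cdot p = p^{k}$ distinct trees $(T',x)$.

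The principal obstacle is ensuring that the branching vertex $u$ is always sent into the pinned-distance-good set $F_0$; this is handled by strengthening the inductive conclusion to place all non-root vertices of $T^{*\prime}$ in $F_0$, which forces the correct restriction to be passed to the next level and costs a factor $2^8$ each time, exactly what the constant $2^{16k}$ in the hypothesis on $|E|$ is chosen to absorb.
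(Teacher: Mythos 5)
Your inductive scheme is genuinely different from the paper's: where the paper splits into two cases based on $\deg_T(v)$ (peeling off the root $v$ when it is a leaf; splitting $T$ into two subtrees at $v$ and applying the induction twice when $\deg_T(v)\ge 2$), you instead always peel a non-root leaf $w$, run the induction on $T^* = T - w$ rooted at $v$, and then extend each embedding by one pendant edge. This unified one-case scheme is attractive and the constant bookkeeping ($|F_0|\ge(|F|+1)/2^8$, $|E'|\ge|E_0|/2^{8(k-1)}\ge|E|/2^{8k}$, the hypothesis $5\cdot 2^{16k}p^{5/4}\le|E|$ surviving one level of descent) all checks out, as does the injectivity of the map $(T^{*\prime}\text{-vector},\delta)\mapsto T'\text{-vector}$.

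However, there is a genuine gap: you never verify that the neighbor $u$ of your chosen leaf $w$ is different from $v$, and your counting step crucially uses this. You write ``let $y\in F_0$ be the image of $u$; since $y\in F_0$, we have $|\Delta^*_y(F)|\gg p$.'' If $u=v$, then the image of $u$ is the root $x\in E_0\subset E$, not a point of $F_0\subset F$, and you have no control over $|\Delta^*_x(F)|$ from the inductive conclusion alone. You cannot always avoid this: if $T$ is a star centered at $v$ (which is a perfectly valid tree with $\deg_T(v)=k\ge 2$), then every leaf $w\ne v$ has $u=v$. For non-star trees with $\deg_T(v)\ge 2$ you could dodge this by choosing a leaf at distance $\ge 2$ from $v$, but you do not say so, and for the star there is no dodge. (The gap is repairable -- for a star with $k-1$ remaining leaves, having $\gg p^{k-1}$ distinct edge-length vectors $(\delta_1,\ldots,\delta_{k-1})\in\Delta_x(F_0)^{k-1}$ forces $|\Delta_x(F_0)|\gg p$, hence $|\Delta^*_x(F)|\gg p$; alternatively one can adopt the paper's split-at-$v$ case -- but as written your proof does not cover this configuration.) The paper avoids the issue entirely because its Case 2 never peels a leaf when $\deg_T(v)\ge 2$; it decomposes $T$ at $v$ into $T_1\cup T_2$ and multiplies two inductive counts, using disjoint halves $F_1,F_2$ of $F$ to keep the vertex sets disjoint.
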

We prove the theorem by induction on $k$. The case $k=1$ follows from Lemma \ref{54}. For any $k>1,$ we now assume that the theorem holds for any tree with at most $k-1$ edges, then show that it also holds for the case of $k$ edges.

{\bf Case $1$:} The degree of $v$ in $T$ is one. Assume that the neighbor of $v$ in $T$ is $u$. Define $T_1:=T\setminus \{v\}$. By induction, we can find a subset $F_2'\subset F_2$ such that $|F_2'|\ge |F_2|/2^{8k-8}$ such that for any $y\in F_2'$, the number of distinct trees $(T_1', y)$, which are isomorphic to $(T_1, u)$ and $V(T_1')\setminus\{y\}\subset F_1$, is at least $\gg p^{k-1}$.  

We would like to apply Lemma \ref{54} to $E$ and $F_2',$ but they are not the same size, so we will break $E$ up into pieces of the appropriate size. Set  $s=\lfloor  |E|/|F_2'| \rfloor \geq 2$. Now, for $j=1, \dots, s,$ greedily select disjoint subsets $G_j\subset E,$ of size $|F_2'|.$  
We apply Lemma \ref{54} for $F_2'$ and $G_j$ for $j=1, \dots, s$ to conclude that there exists a subset $G_j'\subset G_j$ such that 
\begin{align}\label{ineq:G'-j-G-j}
|G_j'|\ge (|G_j|+1)/2^{8}
\end{align}
 and any $x\in G_j'$, we have $|\Delta^*_x(F_2')|\gg p$. Now set

$$E':=\bigcup_{j=1}^{s} G_j'.$$
Because the $G_j$ are disjoint, the size of the union is just the sum as $j$ ranges from $1$ to $s$. Combining \eqref{ineq:G'-j-G-j}, $|G_{j}|=|F^{\prime}_{2}|$, and $|E|=|F|$, gives that 
\begin{align*}
\left|E'\right| =&\sum\limits^{s}_{j=1} |G_j'| \ge \sum\limits^{s}_{j=1} \frac{|G_j|+1}{2^{8}} =\sum\limits^{s}_{j=1} \frac{|F'_{2}|+1}{2^{8}} \\
 \geq & s\left(\frac{|F_2|}{2^{8k}}+\frac{1}{2^8}\right) \geq s\left(\frac{|E|-1}{2^{8k+1}}+\frac{1}{2^8}\right) \\
\geq &\frac{s}{2}\left(\frac{|E|}{2^{8k}}\right) \geq \frac{|E|}{2^{8k}}.
\end{align*}
Every $x\in E'$, comes from some $G_j,$ so the number of distinct trees $(T', x)$ that are isomorphic to $(T, v)$ and $V(T')\setminus \{x\}\subset F$ is at least $\gg p^{k}$.

{\bf Case $2$:} The degree of $v$ in $T$ is more than one. Then we can partition the tree $T$ into two smaller trees, say $T_1$ and $T_2$ with $k_1$ and $k_2$ edges, respectively, sharing only the vertex $v$.

We will run the following argument for the sets $E_1$ and $E_2,$ separately. For a fixed choice of $i$, applying the induction hypothesis for the sets $E_i$ and $F_1$, we can say that there exists a subset $E_i'\subset E_i$ with $|E_i'|\ge |E_i|/2^{8k_1}$ such that for any $x\in E_i'$, the number of distinct trees $(T', x)$, which are isomorphic to $(T_1, v)$ and $V(T')\setminus \{x\}\subset F_1$, is at least $p^{k_1}$.

We now pick a subset $F_2'\subset F_2$ with $|F_2'|=|E_i'|.$ The induction hypothesis guarantees that there exists a subset $E_i''\subset E_i'$ with 
$$|E_i''|\geq|E_i'|/2^{8k_2}\geq|E_i|/2^{8(k_1+k_2)}=|E_i|/2^{8k},$$
such that for any $x\in E_i'',$ the number of distinct trees $(T'',x),$ which are isomorphic to $(T_2,v)$ and $V(T'')\setminus\{x\}\subset F_2'$ is at least $\gg p^{k_2}.$

It is clear that the number of distinct trees $(T', x)$, which are isomorphic to $(T, v)$ and $V(T')\setminus \{x\}\subset F$ is at least the product of the number of trees $(T', x)$ and $(T'', x)$ for any $x\in E_{i}''$, giving us at least $\gg p^{k_1+k_2}=p^k$. By running this argument for both $E_1$ and $E_2$, we complete the proof.

\end{proof}

\subsection{Sets of large size}
Using the same argument as in the case of sets of medium size and Lemma \ref{bigsize} in place of Lemma \ref{54}, the next theorem is obtained.
\begin{theorem}\label{techPP}
Given a tree $T$ with $k$ edges and an arbitrary vertex $v$, let $E, F\subset \mathbb{F}_p^2$ with  $|E|=|F|\ge 4\cdot  2^{16k}\cdot p^{4/3}$. There is a subset $E'$ of $E$ with $|E'|\ge |E|/2^{8k}$ such that for any $x\in E'$, the number of distinct trees $(T', x)$ that are isomorphic to $(T, v)$ and have $V(T')\setminus \{x\}\subset F$ is at least $\gg p^{k}$. 
\end{theorem}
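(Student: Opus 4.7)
The plan is to mirror the proof of Theorem \ref{techP} almost verbatim, with the single substitution of Lemma \ref{bigsize} for Lemma \ref{54} at every appeal. The induction is again on $k$, with the same dichotomy on the degree of the distinguished vertex $v$ in $T$; the only work is to redo the size-accounting in the new regime, where there is no upper bound on $|E|=|F|$ but the three-part hypothesis of Lemma \ref{bigsize} ($|E|\ge 4p$, $|F|\ge 8p$, and $|F||E|^{2}\ge 64p^{4}$) must be verified after each round of subset extraction.

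For the base case $k=1$, the hypothesis $|E|=|F|=n\ge 4\cdot 2^{16}\cdot p^{4/3}$ gives $n\ge 8p$ and $n^{3}\ge 64 p^{4}$, so Lemma \ref{bigsize} applies directly to the pair $(E,F)$ and yields a subset $E'\subset E$ with $|E'|\ge |E|/24\ge |E|/2^{8}$ and $|\Delta^{*}_{x}(F)|\gg p$ for every $x\in E'$. For the inductive step, I would carry out Case 1 (the distinguished vertex $v$ is a leaf of $T$, with unique neighbor $u$) exactly as written: delete $v$ to obtain $T_{1}$ with $k-1$ edges, apply the induction hypothesis to the relevant halves of $F$ to produce $F_{2}'$ with $|F_{2}'|\ge |F_{2}|/2^{8(k-1)}$, then greedily partition $E$ into $s=\lfloor |E|/|F_{2}'|\rfloor$ disjoint pieces $G_{j}$ of size $|F_{2}'|$, and apply Lemma \ref{bigsize} to each pair $(G_{j},F_{2}')$ to produce $G_{j}'\subset G_{j}$ with $|G_{j}'|\ge |G_{j}|/2^{8}$ and $|\Delta^{*}_{x}(F_{2}')|\gg p$ for every $x\in G_{j}'$. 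The union $E':=\bigcup_{j} G_{j}'$ inherits the bound $|E'|\ge |E|/2^{8k}$ by the same arithmetic as in Theorem \ref{techP}, and each $x\in E'$ extends a pinned $(T_{1},u)$-configuration in $F_{1}$ by an edge to a new point in $F_{2}'$, yielding $\gg p\cdot p^{k-1}=p^{k}$ distinct trees $(T',x)$. Case 2 (the degree of $v$ in $T$ is at least two) splits $T$ into $T_{1},T_{2}$ of $k_{1},k_{2}$ edges meeting only at $v$, and one applies the induction hypothesis twice on the two halves of $E,F$; the two pinned configurations at a common $x$ combine into a single $(T,v)$-copy and the product of counts is $\gg p^{k_{1}+k_{2}}=p^{k}$.

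The part that requires attention is the quantitative verification that Lemma \ref{bigsize} is always applicable in Case 1. At the $k$-th level of the induction the sets that get fed in have size at least $|F|/2^{8k}\ge 4\cdot 2^{8k}\cdot p^{4/3}$; this easily satisfies $|E|\ge 4p$ and $|F|\ge 8p$, and for the bisector-energy hypothesis we need
\[
|F_{2}'|\cdot |G_{j}|^{2}\;\ge\;\bigl(4\cdot 2^{8k}\cdot p^{4/3}\bigr)^{3}\;=\;64\cdot 2^{24k}\cdot p^{4}\;\ge\;64 p^{4},
\]
which holds with enormous slack. The factor $2^{16k}$ in the hypothesis of Theorem \ref{techPP} is chosen precisely to absorb this $s$-fold shrinkage across all $k$ induction levels, while the factor $2^{8k}$ in the conclusion simply tracks the $1/24 \le 1/2^{8}$ loss at each level in Lemma \ref{bigsize}.

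The main obstacle I expect, and the only substantive difference from the medium-size proof, is this very bookkeeping: in the medium case the upper bound $|E|\le p^{4/3}$ of Theorem \ref{2.1} had to be respected at every step, whereas here the one-sided lower bound together with the multiplicative hypothesis $|F||E|^{2}\ge 64 p^{4}$ must be re-checked after each subset restriction. Once the constants are arranged as above, no new ideas are needed and the proof proceeds in parallel with Theorem \ref{techP}.
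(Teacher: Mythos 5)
Your proposal is correct and matches the paper's approach exactly: the paper's entire justification for Theorem \ref{techPP} is the one-line remark that one runs the proof of Theorem \ref{techP} with Lemma \ref{bigsize} substituted for Lemma \ref{54}, and your write-up fills in precisely the bookkeeping that remark leaves implicit — re-verifying the three hypotheses of Lemma \ref{bigsize} after each halving/partition step and noting that the $1/24$ loss per level is comfortably absorbed by the $2^{-8}$ loss the recursion budgets for, while the $2^{16k}$ factor in the hypothesis keeps the pieces large enough for the $|F||E|^2 \ge 64p^4$ condition throughout.
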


\subsection{Conclusion}
Compared to the conclusion of Theorem \ref{tree0}, Theorems \ref{techP} and \ref{techPP} are stronger in the ranges $5\cdot 2^{16k}\cdot p^{5/4}\le |E|\le p^{4/3}$ and  $|E|\ge 4\cdot 2^{16k}\cdot p^{4/3}$. 

In the range,  $p^{4/3}\le |E|\le 4\cdot 2^{16k}\cdot p^{4/3}$, we simply take a subset of size $p^{4/3}$ and apply Theorem \ref{techP} to conclude that there exists a subset $E'\subset E$ with $|E'|\ge |E|/2^{24k+2}$ such that for any $x\in E'$, the number of distinct trees $(T', x)$ that are isomorphic to $(T, v)$ and have $V(T')\setminus \{x\}\subset F$ is at least $\gg p^{k}$. This completes the proof of Theorem \ref{tree0}.
\subsection{Remarks over arbitrary finite fields}
The argument in the proof of Theorem \ref{techPP} works not only over prime fields, but also over arbitrary finite fields. Therefore, in the setting of arbitrary finite fields $\mathbb{F}_q$, the following weaker result is achieved.

\begin{theorem}\label{techPPF_Q}
Given a tree $T$ with $k$ edges and an arbitrary vertex $v$, let $E, F\subset \mathbb{F}_q^2$ with $|E|=|F|\ge 4\cdot  2^{16k}\cdot q^{4/3}$. There is a subset $E'$ of $E$ with $|E'|\ge |E|/2^{8k}$ such that for any $x\in E'$, the number of distinct trees $(T', x)$ that are isomorphic to $(T, v)$ and have $V(T')\setminus \{x\}\subset F$ is at least $\gg q^{k}$. 
\end{theorem}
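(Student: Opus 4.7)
The plan is to imitate the proof of Theorem \ref{techPP} with $p$ replaced by $q$ throughout, after first verifying that the only input that was stated only over prime fields, namely Lemma \ref{bigsize}, carries over to arbitrary finite fields $\mathbb{F}_q$. Once an $\mathbb{F}_q$-version of Lemma \ref{bigsize} is in hand, the induction on the number of edges $k$ is identical to the one already carried out for Theorem \ref{techP} (and hence for Theorem \ref{techPP}), so no new combinatorial ideas are needed.

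First I would establish the $\mathbb{F}_q$-analogue of Lemma \ref{bigsize}: for $E,F\subset \mathbb{F}_q^2$ with $|E|\ge 4q$, $|F|\ge 8q$, and $|F||E|^2\ge 64 q^4$, there is $F'\subset F$ with $|F'|\ge |F|/24$ and $|\Delta^*_x(E)|\gg q$ for every $x\in F'$. The derivation in the proof of Lemma \ref{bigsize} has two ingredients: the bisector energy bound $Q(E)\le 4|E|^4/p^2+10p|E|^2$ from Hanson--Lund--Roche-Newton, which in turn uses the Iosevich--Rudnev circle-incidence bound, and the point/line incidence bound from the same paper. Both of these tools are stated and proved in $\mathbb{F}_q^2$ for an arbitrary odd prime power $q$, not only for prime $q$. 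Hence the chain of inequalities leading to $\mathcal{T}^*(F,E,E)\le 2|F||E|^2/q$ transfers verbatim, and the Cauchy--Schwarz and popularity step used to produce $F'$ is purely combinatorial, so it transfers as well.

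Next I would run the induction of Theorem \ref{techP} with $p$ replaced by $q$. The base case $k=1$ is exactly the $\mathbb{F}_q$-version of Lemma \ref{bigsize} just established. For $k>1$, if $v$ has degree one in $T$ with unique neighbor $u$, I would apply the induction hypothesis to $T\setminus\{v\}$ and $u$ to obtain a popular set $F_2'\subset F$ of roots for copies of $T\setminus\{v\}$ in $F$, then slice $E$ into $s=\lfloor|E|/|F_2'|\rfloor$ disjoint pieces $G_j$ of size $|F_2'|$ and apply the new lemma to each pair $(G_j,F_2')$ to extract $G_j'$, and take $E':=\bigcup_j G_j'$. If $v$ has degree larger than one, I would split $T$ at $v$ into two subtrees $T_1,T_2$ with $k_1+k_2=k$ edges, apply the induction hypothesis twice in succession to thin $E$ down to $E''$ of size at least $|E|/2^{8k}$ on which a copy of $T_1$ and a copy of $T_2$ can be independently rooted at each $x\in E''$, and then multiply the two counts to produce $\gg q^{k_1}\cdot q^{k_2}=q^k$ distinct trees rooted at each such $x$. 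The constants $2^{16k}$ in the size hypothesis and $2^{8k}$ in the refinement ratio propagate through the recursion exactly as in the prime-field argument.

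The only step that requires any care is verifying that the hypotheses $|E|\ge 4q$, $|F|\ge 8q$, $|F||E|^2\ge 64 q^4$ of the new lemma are preserved at every level of the induction after the refinements shrink the sets. Since each inductive level loses a factor of at most $2^{8k}$ in cardinality, the hypothesis $|E|=|F|\ge 4\cdot 2^{16k}\cdot q^{4/3}$ is chosen precisely so that the refined sets retain cardinality at least a constant multiple of $q^{4/3}$, and the identity $q^{4/3}\cdot q^{8/3}=q^4$ keeps $|F||E|^2\ge 64 q^4$ valid at every stage. This quantitative bookkeeping is the only real obstacle, and it is handled by the same constants already used in Theorems \ref{techP} and \ref{techPP}; the rest of the argument is a direct transcription.
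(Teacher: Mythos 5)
Your proposal is correct and follows essentially the same route as the paper, which disposes of Theorem \ref{techPPF_Q} in one sentence by observing that the proof of Theorem \ref{techPP} transfers to $\mathbb{F}_q$; you have simply made explicit the verification the paper leaves implicit, namely that the Hanson--Lund--Roche-Newton bisector energy and point--line incidence bounds and the Iosevich--Rudnev input are all stated over arbitrary $\mathbb{F}_q$, so Lemma \ref{bigsize} and hence the whole induction carry over unchanged.
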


\section{Proof of Theorem \ref{tree1}}
The idea for the proof of Theorem \ref{tree1} is the same as that of Theorem \ref{tree0}, in particular, we will make use of  Lemma \ref{arbi} instead of Lemma \ref{54}. For small sets in an arbitrary field, we need to carefully run an analysis on the number of intersection points between isotropic lines and the sets $E$ and $F$. We now state and prove a more general statement.

\begin{theorem}\label{techArbi}
Given a tree $T$ with $k$ edges and an arbitrary vertex $v$, let $E, F\subset \mathbb{F}^2$ with $|E|=|F|$. If $\mathtt{char}(\mathbb{F})>0$, then we need the restriction that $|E|\le \mathtt{char}(\mathbb{F})^{4/3}$. We assume in addition that any isotropic line contains at most $M=4^{-1}(16K)^{-(k-1)}(\lfloor |F|/8 \rfloor)^{\left(\frac{2}{3}\right)^{(k-1)}}$ points from $E$ or $F$. There is a subset $F'$ of $F$ with $|F'|\geq (16K)^{-k}|F|^{\left(\frac{2}{3}\right)^{k}}$ such that for any $x\in F',$ the number of distinct trees $(T', x)$ that are isomorphic to $(T,v)$ with $V(T')\setminus \{x\}\subset E$ is at least  $\gg |E|^\frac{2k}{3}.$
\end{theorem}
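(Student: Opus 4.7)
The plan is to proceed by induction on $k$, case-analyzing on $\deg_T(v)$ just as in the proof of Theorem~\ref{techP}, with Lemma~\ref{arbi} replacing Lemma~\ref{54}. The base case $k=1$ is immediate: the isotropic hypothesis gives $M \leq \lfloor|F|/8\rfloor/4 < |F|/4$, so Lemma~\ref{arbi} applied to $(E,F)$ returns a subset $F'\subset F$ of size at least $(16K)^{-1}|F|^{2/3}$ (using $K\geq 4$) with $|\Delta_x^*(E)| \gg |E|^{2/3}$ for every $x\in F'$, and each such $x$ already pins $\gg |E|^{2/3}$ trees with one edge.

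For the inductive step, when $\deg_T(v)\geq 2$ I would partition $T$ at $v$ into two subtrees $T_1, T_2$ sharing only the vertex $v$ with $k_1+k_2=k$ edges. Applying the induction hypothesis first to $(T_1,v)$ on $(E,F)$ produces $F_1\subset F$, and then applying it to $(T_2,v)$ on $(E,F_1)$ produces $F'\subset F_1$; monotonicity in the isotropic parameter preserves the hypothesis at each step, and the multiplicativity $(2/3)^{k_1}\cdot(2/3)^{k_2}=(2/3)^k$ lets the size bound $|F'|\geq (16K)^{-k}|F|^{(2/3)^k}$ come out after collecting constants. For each $x\in F'$, trees isomorphic to $(T_1,v)$ pinned at $x$ combine independently with trees isomorphic to $(T_2,v)$ pinned at $x$ (they share only the vertex $x$, so their edge-length vectors concatenate into distinct vectors) to give the product $\gg |E|^{2k_1/3}\cdot |E|^{2k_2/3} = |E|^{2k/3}$.

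When $\deg_T(v)=1$, let $u$ be the unique neighbor of $v$ and set $T_1 := T\setminus\{v\}$. I would apply the induction hypothesis to $(T_1,u)$ with both sets taken to be $E$ --- the monotonicity $M_k < M_{k-1}$, clear from $(2/3)^{k-1}>(2/3)^k$, guarantees the isotropic hypothesis carries over --- obtaining $E_*\subset E$ with $|E_*|\geq (16K)^{-(k-1)}|E|^{(2/3)^{k-1}}$ such that every $y\in E_*$ pins $\gg |E|^{2(k-1)/3}$ trees isomorphic to $(T_1,u)$ with vertices in $E$. Following the partitioning trick from the proof of Theorem~\ref{techP}, I would greedily split $F$ into $s=\lfloor|F|/|E_*|\rfloor$ disjoint blocks $G_1,\dots,G_s$ each of size $|E_*|$, apply Lemma~\ref{arbi} to each pair $(E_*,G_j)$ (after verifying $M_k < |E_*|/4$), and take $F':=\bigcup_j G_j'$. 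For each $x\in F'$, each distance $d\in\Delta_x^*(E_*)$ singles out a witness $y\in E_*$ with $|x-y|=d$, and the $\gg |E|^{2(k-1)/3}$ trees $(T_1',y)$ extend through the edge $(x,y)$ into distinct edge-length vectors for $(T',x)$, distinct either in the first coordinate $d$ or in the remaining ones.

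The main obstacle will be verifying the Case-1 tree-count bound. Lemma~\ref{arbi} applied to $(E_*,G_j)$ only delivers $|\Delta_x^*(E_*)|\gg |E_*|^{2/3}$, so the naive product $|E_*|^{2/3}\cdot |E|^{2(k-1)/3}$ falls short of the claimed $|E|^{2k/3}$ by the factor $(|E|/|E_*|)^{2/3}$. Bridging this gap appears to require either a two-set variant of Lemma~\ref{arbi} whose distance bound scales with $|E|$ rather than the common size $|E_*|$, or a refined pigeonhole isolating many $d\in\Delta_x^*(E)$ whose witnesses $y$ lie in $E_*$. Once that step is in hand, the remaining bookkeeping --- tracking $(16K)^{-O(k)}$ constants through both cases and deriving $|F'|\geq (16K)^{-k}|F|^{(2/3)^k}$ from the block count $s\cdot |E_*|^{2/3}/(8(K+1))$ --- is routine.
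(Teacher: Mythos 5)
Your proposal follows exactly the same skeleton as the paper's argument: induction on $k$, a case split on $\deg_T(v)$, Lemma~\ref{arbi} for the base case, a partitioning trick (subsets $G_j$ of matching size) so that the two-set Lemma~\ref{arbi} can be applied, and, for $\deg_T(v)\geq 2$, decomposition into two subtrees sharing $v$ with the tree counts multiplying. The only cosmetic difference is that the paper applies the induction hypothesis to two halves $F_1,F_2$ of $F$ (and then partitions $E$ into blocks matching $|F_2'|$), while you apply it to $(E,E)$ and partition $F$; this just swaps labels and does not change the substance.

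The obstacle you flagged is a genuine one, and it is worth saying plainly: the paper's own proof does not resolve it either. At the end of its Case~1, the paper asserts that every $x\in E'$ pins $\gg n^{2k/3}$ distinct trees, but the displayed reasoning only gives
\[
|\Delta^*_x(F_2')|\cdot n^{\frac{2(k-1)}{3}}\;\gg\;|F_2'|^{2/3}\cdot n^{\frac{2(k-1)}{3}},
\]
since Lemma~\ref{arbi} is applied to sets of the common size $|F_2'|$, and the induction hypothesis only guarantees $|F_2'|\gtrsim (16K)^{-(k-1)}n^{(2/3)^{k-1}}$. Already for $k=2$ this yields roughly $n^{4/9}\cdot n^{2/3}=n^{10/9}$, short of the claimed $n^{4/3}$ by a polynomial factor that no $k$-dependent constant can absorb. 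Your two suggested repairs (a two-set version of Lemma~\ref{arbi} in which the distance count scales with the ambient $|E|$ rather than the common block size, or a pigeonhole guaranteeing that many $d\in\Delta^*_x(E)$ have a witness in $E_*$) are exactly the kinds of strengthening that would be needed; neither is supplied by the paper. It is also worth noting that the paper's Case~2 has a parallel issue that you did not flag: after passing from $E_i$ to $E_i'$, the induction hypothesis is applied to $(E_i',F_2')$ with $|E_i'|\approx n^{(2/3)^{k_1}}\ll n$, which only yields $\gg |E_i'|^{2k_2/3}$ trees rooted at each pinned point, yet the paper asserts $\gg n^{2k_2/3}$. In short, your proposal is faithful to the paper's strategy, and the gap you honestly reported is not a defect of your write-up but of the underlying argument as presented in the paper.
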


We proceed by induction on the number of edges in the tree. Lemma \ref{arbi} yields the desired result for one edge. For a tree with $k>1,$ we assume that the statement of the theorem holds for any tree with at most $k-1$ edges. We will again select two subsets $E_1, E_2 \subset E$ both of size $\lfloor |E|/2 \rfloor,$ and similarly select $F_1, F_2\subset F.$

{\bf Case 1:} The degree of $v$ in $T$ is one. In this case, we denote the neighbor of $v$ in $T$ by $u$. Define $T_1:=T\setminus\{v\}.$ We now want to apply the induction hypothesis for two sets $F_1$ and $F_2$ for the tree $T_1$ of $k-1$ edges. But first, we need to verify that $F_1$ and $F_2$ satisfy the condition on the maximal number of points on each isotropic line with respect to their sizes. Specifically, we need to check that the bound of no more than $M$ points from $F$ on any isotropic line will guarantee that we do not have too many points from any isotropic line on either $F_i$, in terms of the size of the $F_i$, to apply the induction hypothesis. To be precise, we need that
\begin{equation}\label{MIH}
\frac{1}{4}\left(\frac{1}{16K}\right)^{k-2}\left( \left\lfloor \frac{|F_i|}{8} \right\rfloor \right)^{\left(\frac{2}{3}\right)^{k-2}}\ge M = \frac{1}{4}\left(\frac{1}{16K}\right)^{k-1}\left(\left\lfloor \frac{|F|}{8} \right\rfloor\right)^{\left(\frac{2}{3}\right)^{(k-1)}},
\end{equation}
which is true because
\[(16K)^{\left(\frac{3}{2}\right)^{k-2}}\left\lfloor\frac{|F_i|}{8}\right\rfloor\geq \left\lfloor\frac{|F|}{8}\right\rfloor^\frac{2}{3}.\]

Thus, we can find a subset $F_2'\subset F_2$ such that $|F_2'|\geq (16K)^{-(k-1)}|F_2|^{\left(\frac{2}{3}\right)^{k-1}}$ such that for any $y\in F_2'$, the number of distinct trees $(T_1',y)$ that are isomorphic to $(T_1,u)$ and have $V(T_1')\setminus\{y\}\subset F_1,$ is at least $\gg n^\frac{2(k-1)}{3}.$ 

We would like to apply Lemma \ref{arbi} to $E$ and $F_2',$ but they are not the same size, so as before we will break $E$ up into pieces of the appropriate size. Set $s=\lfloor  |F|/|F_2'| \rfloor \geq 2.$ Now, for $j=1, \dots, s,$ greedily select disjoint subsets $G_j\subset E,$ of size $|F_2'|.$ 
Notice that the $G_j$ and $F_2'$ also satisfy the condition of having no more than $M$ points on any isotropic line, as they are subsets of $E$ and $F,$ respectively. Because $M\leq |F_2'| / 4 = |G_j|/4,$ we can apply Lemma \ref{arbi} for $F_2'$ and each $G_j$ to conclude that there exist a subsets $G_j'\subset G_j$ such that 
\begin{align}\label{est:G'-G-j}
|G_j'|\ge \frac{|G_j|^\frac{2}{3}}{8(K+1)}
\end{align}
and for any $x\in G_j'$, we have $|\Delta^*_x(F_2')|\gg |F_2'|^\frac{2}{3}$.  
Now set
\begin{align}\label{eq:E}
E':=\bigcup_{j=1}^{s} G_j'.
\end{align}
We are going to prove the following claim. 
\begin{align}\label{ine:E'-E}
|E'|\geq (16K)^{-k}|E|^{\left(\frac{2}{3}\right)^{k}}.
\end{align}

To prove \eqref{ine:E'-E}, we see from \eqref{eq:E}, \eqref{est:G'-G-j}, $s \geq 2$, and $|G_j|=|F_2'|$ that
\begin{align}\label{ineq:Eprime}
\nonumber
|E'| =&\sum\limits^{s}_{j=1} |G_j'| \ge \sum\limits^{s}_{j=1} \frac{|G_j|^\frac{2}{3}}{8(K+1)} =\sum\limits^{s}_{j=1} \frac{|F_2'|^\frac{2}{3}}{8(K+1)}
 \geq 2\frac{ \left((16K)^{-(k-1)}|F_2|^{\left(\frac{2}{3}\right)^{k-1}}\right)^\frac{2}{3}}{8(K+1)} \\
 \geq &(4K+4)^{-1}(16K)^{-\frac{2}{3}(k-1)}|F_2|^{\left(\frac{2}{3}\right)^{k}} \geq 4 (16K)^{-k}|F_2|^{\left(\frac{2}{3}\right)^{k}}. 
\end{align}
It follows from $k \geq 2$, $|F_{2}| \geq \frac{|E|-1}{2}$ that 
\begin{align}\label{ineq:F_2-E}
\nonumber
|F_2|^{\left(\frac{2}{3}\right)^{k}} \geq &\left(\frac{|E|-1}{2}\right)^{\left(\frac{2}{3}\right)^{k}} =|E|^{\left(\frac{2}{3}\right)^{k}} \left(\frac{1}{2}\right)^{\left(\frac{2}{3}\right)^{k}} \left(1-\frac{1}{|E|}\right)^{\left(\frac{2}{3}\right)^{k}} \\  
> &|E|^{\left(\frac{2}{3}\right)^{k}} \left(\frac{1}{2}\right)^{\left(\frac{2}{3}\right)^{k}} \cdot \frac{1}{2} 
\geq \frac{1}{4}|E|^{(\frac{2}{3})^{k}}.
\end{align}

Combining \eqref{ineq:Eprime} and \eqref{ineq:F_2-E}, we get \eqref{ine:E'-E}, as claimed.
This tells us that there exists a subset $|E'|\geq (16K)^{-k}|E|^{\left(\frac{2}{3}\right)^{k}}$ such that for any $x\in E',$ the number of distinct trees $(T', x)$ which are isomorphic to $(T,v)$ with $V(T')\setminus \{x\}\subset F$ is at least $\gg n^\frac{2k}{3}.$

{\bf Case $2$:} The degree of $v$ in $T$ is more than one. Then we can partition the tree $T$ into two smaller trees, say $T_1$ and $T_2$ with $k_1$ and $k_2$ edges, respectively, sharing only the vertex $v$. 

We will run the following argument for the sets $E_1$ and $E_2,$ separately. For a fixed choice of $i$, applying the induction hypothesis for the sets $E_i$ and $F_1$, we can say that there exists a subset $E_i'\subset E_i$ with $|E_i'|\ge (16K)^{-k_1} |E_i|^{\left(\frac{2}{3}\right)^{k_1}}$ such that for any $x\in E_i'$, the number of distinct trees $(T', x)$, which are isomorphic to $(T_1, v)$ and $V(T')\setminus \{x\}\subset F_1$, is at least $\gg n^\frac{2k_1}{3}$.

We now pick a subset $F_2'\subset F_2$ with $|F_2'|=|E_i'|.$ Again, we notice that since $E_i'$ and $F_2'$ are subsets of $E$ and $F,$ respectively, we are guaranteed that neither of them has more than $M$ points on any isotropic line, but we must again check the conditions of the induction hypothesis in order to apply it, as we did when verifying \eqref{MIH}. Specifically, we need to ensure that

\begin{equation}\label{MIH2}
\frac{1}{4}\left(\frac{1}{16K}\right)^{k_2-1}\left( \left\lfloor \frac{|F_i|}{8} \right\rfloor \right)^{\left(\frac{2}{3}\right)^{k_2-1}}\ge M=\frac{1}{4}\left(\frac{1}{16K}\right)^{k-1}\left(\left\lfloor \frac{|F|}{8} \right\rfloor\right)^{\left(\frac{2}{3}\right)^{k-1}},
\end{equation}
which is guaranteed as we recall $k=k_1+k_2$ and see
$$(16K)^{k_1+\left(\frac{3}{2}\right)^{k_2-1}} \left\lfloor\frac{|F_i|}{8}\right\rfloor\geq \left\lfloor\frac{|F_i|}{8}\right\rfloor \geq \left(\left\lfloor\frac{|F|}{8}\right\rfloor\right)^{\left(\frac{2}{3}\right)^{k_1}}.$$

The induction hypothesis tells us that there exists a subset $E_i''\subset E_i' \subset E$
such that for any $x\in E_i'',$ the number of distinct trees $(T'',x)$ that are isomorphic to $(T_2,v)$ and have $V(T'')\setminus\{x\}\subset F_2'$ is at least $\gg n^\frac{2k_2}{3}.$ Moreover, we also have
\begin{align*}
|E_i''|\ge &(16K)^{-k_2}|E_i'|^{\left(\frac{2}{3}\right)^{k_2}}\geq  (16K)^{-k_2}\left((16K)^{-k_1} |E_i|^{\left(\frac{2}{3}\right)^{k_1}} \right)^{\left(\frac{2}{3}\right) ^{k_2}} \\
\geq &  (16K)^{-(k_1 \cdot \left(\frac{2}{3}\right) ^{k_2}+k_2)}|E_i|^{\left(\frac{2}{3}\right)^{k_1} \left(\frac{2}{3}\right)^{k_2}} \\
\geq &4 (16K)^{-(k_1 +k_2)} \cdot |E_i|^{\left(\frac{2}{3}\right)^{k_1} \left(\frac{2}{3}\right)^{k_2}}
=4(16K)^{-k}|E_i|^{\left(\frac{2}{3}\right)^{k}}\geq (16K)^{-k}|E|^{\left(\frac{2}{3}\right)^{k}},
\end{align*}
where the last inequality can be explained by the same argument as in \eqref{ineq:F_2-E}.

It is clear that the number of distinct trees $(T', x)$ that are isomorphic to $(T, v)$ and have $V(T')\setminus \{x\}\subset F$ is at least the product of the number of trees $(T', x)$ and $(T'', x)$ for any $x\in E_{i}''$, giving us at least $\gg  n^\frac{2(k_1+k_2)}{3}=n^\frac{2k}{3}$. By running this argument for both $E_1$ and $E_2$, we complete the proof.

\section{Acknowledgements}
T. Pham would like to thank to the VIASM for the hospitality and for the excellent working condition.

\end{document}